\newcommand{\abs}[1]{\mathopen\lvert#1\mathclose\rvert}
\newcommand{\bigabs}[1]{\bigl\lvert#1\bigr\rvert}
\newcommand{\norm}[1]{\mathopen\lVert#1\mathclose\rVert}
\newcommand{\bignorm}[1]{\mathopen\big\lVert#1\mathclose\big\rVert}
\newcommand{\floor}[1]{\lfloor#1\rfloor}
\newcommand{\N}{{\mathbb N}}
\newcommand{\R}{{\mathbb R}}
\renewcommand{\S}{{\mathbb S}}
\newcommand{\cH}{\mathcal{H}}
\newcommand{\cM}{\mathcal{M}}
\newcommand{\cQ}{\mathcal{Q}}
\newcommand{\cT}{\mathcal{T}}
\DeclareMathOperator{\supp}{supp}
\DeclareMathOperator{\dist}{dist}
\DeclareMathOperator{\sgn}{sgn}
\newcommand{\dif}{\,\mathrm{d}}
\theoremstyle{plain}
\newtheorem{proposition}{Proposition}[section]
\newtheorem{lemma}[proposition]{Lemma}
\newtheorem{theorem}{Theorem}
\theoremstyle{remark}
\newtheorem{openproblem}{Open Problem}
\newtheorem*{Claim}{Claim}
\newcounter{cte}
\newcommand{\Constant}{\refstepcounter{cte} C_{\thecte}}
\newcommand{\NewConstant}{\setcounter{cte}{1} C_{\thecte}}
\newcommand{\SameConstant}{C_{\thecte}}
\numberwithin{equation}{section}
\title[Density for fractional Sobolev spaces into manifolds]{Density of smooth maps for fractional Sobolev spaces $W^{s, p}$ into $\ell$ simply connected manifolds when $s \ge 1$}
\author{Pierre Bousquet}
\address{
Pierre Bousquet\hfill\break\indent
Aix-Marseille Université, CMI 39\hfill\break\indent
Laboratoire d'analyse, topologie, probabilités UMR7353\hfill\break\indent
Rue Fr\'ed\'eric Joliot Curie\hfill\break\indent
13453 Marseille Cedex 13\hfill\break\indent
France}
\email{bousquet@cmi.univ-mrs.fr}
\author{Augusto C. Ponce}
\address{
Augusto C. Ponce\hfill\break\indent
 Université catholique de Louvain\hfill\break\indent
 Institut de Recherche en Math{\'e}matique et Physique\hfill\break\indent
 Chemin du cyclotron 2, bte L7.01.02\hfill\break\indent
1348 Louvain-la-Neuve\hfill\break\indent
Belgium}
\email{Augusto.Ponce@uclouvain.be}
\author{Jean Van Schaftingen}
\address{
Jean Van Schaftingen\hfill\break\indent
 Université catholique de Louvain\hfill\break\indent
 Institut de Recherche en Math{\'e}matique et Physique\hfill\break\indent
 Chemin du cyclotron 2, bte L7.01.01\hfill\break\indent
1348 Louvain-la-Neuve\hfill\break\indent
Belgium}
\email{Jean.VanSchaftingen@uclouvain.be}
\begin{document}

\begin{abstract}
Given a compact manifold \(N^n \subset \R^\nu\), \(s \ge 1\) and \(1 \le p < \infty\), we prove that the class \(C^\infty(\overline{Q}^m; N^n)\) of smooth maps on the cube with values into \(N^n\) is strongly dense in the fractional Sobolev space \(W^{s, p}(Q^m; N^n)\) when \(N^n\) is \(\floor{sp}\) simply connected. 
For \(sp\) integer, we prove weak density of \(C^\infty(\overline{Q}^m; N^n)\) when \(N^n\) is \(sp - 1\) simply connected.
The proofs are based on the existence of a retraction of \(\R^\nu\) onto \(N^n\) except for a small subset of \(N^n\) and on a pointwise estimate of fractional derivatives of composition of maps in \(W^{s, p} \cap W^{1, sp}\).
\end{abstract}

\keywords{Strong density; weak density; Sobolev maps; fractional Sobolev spaces; simply connectedness}

\maketitle

\section{Introduction}

In this paper we discuss results and open questions related to the density of smooth maps in Sobolev spaces with values into a manifold. 
For this purpose, let $N^n$ be a compact manifold of dimension \(n\) imbedded in the Euclidean space \(\R^\nu\). 
For any  \(s>0\) and \(1 \le p < +\infty\), we define the class of Sobolev maps defined on the unit \(m\) dimensional cube \(Q^m\) with values into \(N^n\),
\[
W^{s, p}(Q^m; N^n) 
= \big\{u \in W^{s, p}(Q^m; \R^\nu) : u \in N^n \ \textrm{a.e.} \big\}.
\]
When \(s = k\) is an integer, \(W^{s, p}(Q^m; \R^\nu)\) is the standard Sobolev space equipped with the norm
\[
\norm{u}_{W^{s, p}(Q^m)} = \norm{u}_{L^p(Q^m)} + \sum_{j = 1}^k \norm{D^j u}_{L^p(Q^m)}.
\]
When \(s\) is not an integer, \(s = k + \sigma\) with \(k \in \N\) and \(0 < \sigma < 1\).
In this case, by \(u \in W^{s, p}(Q^m; \R^\nu)\) we mean that \(u \in W^{k, p}(Q^m; \R^\nu)\) and
\[
[D^k u]_{W^{\sigma, p}(Q^m)} 
= \bigg( \int\limits_{Q^m}\int\limits_{Q^m} \frac{|D^{k} u(x)-D^{k} u(y)|^p}{|x-y|^{m+\sigma p}} \dif x \dif y \bigg)^{1/p} < +\infty,
\]
and the associated norm is given by
\[
\norm{u}_{W^{s, p}(Q^m)} = \norm{u}_{L^p(Q^m)} + \sum_{j = 1}^k \norm{D^j u}_{L^p(Q^m)} + [D^k u]_{W^{\sigma, p}(Q^m)}.
\]

The fractional Sobolev spaces \(W^{s, p}(Q^m; \R^\nu)\) arise in the trace theory of Sobolev spaces of integer order. 
For example, the trace is a continuous linear operator from \(W^{1, p}(Q^m; \R^\nu)\) onto \(W^{1 - \frac{1}{p}, p}(\partial Q^m; \R^\nu)\) \cite{Gagliardo}*{Theorem~1.I}. 
\medskip

We first address the question of \emph{strong density} of smooth maps: given $u \in W^{s, p}(Q^m; N^n)$, does there exist a sequence in $C^{\infty}(\overline Q^m; N^n)$ which converges to \(u\) with respect to the strong topology induced by the \(W^{s, p}\) norm?

A naive approach consists in applying a standard regularization argument.
This works well for maps in $W^{s, p}(Q^m; \R^{\nu})$ and
shows that $C^{\infty}(\overline Q^m; \R^\nu)$ is strongly dense in that space.
When $\R^{\nu}$ is replaced by $N^n$, the conclusion is less clear since the convolution of a map $u \in W^{s, p}(Q^m; N^n)$ with a smooth kernel $\varphi_{t}$ yields a map with values in the convex hull of $N^n$.
In this case, one might try to project \(\varphi_t * u\) into the manifold \(N^n\).
This is indeed possible for \(sp \ge m\).

If $sp>m$, then by the Morrey-Sobolev imbedding, $W^{s, p}$ is continuously imbedded into \(C^0\).
Thus, every map $u\in W^{s,p}(Q^m; N^n)$ has a continuous representative and \(\varphi_t * u\) converges uniformly as \(t\) tends to zero. 
In particular,
\begin{equation}\label{contdist}
\lim_{t \to 0}\sup_{x\in Q^m}{\dist{(\varphi_{t} * u(x), N^n)}} =  0.
\end{equation}
Hence, one may project $\varphi_{t} * u$ back to $N^n$ since the nearest point projection \(\Pi\) is well defined and smooth on a neighborhood of $N^n$.

If \(sp = m\), then the Morrey-Sobolev imbedding fails but property \eqref{contdist} remains true since \(W^{s, p}\) injects continuously into the space VMO of functions with vanishing mean oscillation.
This fact has been observed by Schoen and Uhlenbeck~\cite{SchoenUhlenbeck}.
 
We may summarize as follows:

\begin{theorem} \label{premiertheorem}
If $sp\geq m$, then $C^{\infty}(\overline Q^m; N^n)$ is strongly dense in $W^{s,p}(Q^m; N^n)$.
\end{theorem}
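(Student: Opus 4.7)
My approach is to mollify $u$ to make it smooth as an $\R^\nu$-valued map and then project the result back to $N^n$ using the nearest point projection. Given $u \in W^{s, p}(Q^m; N^n)$, I would first extend $u$ to a slightly larger cube $Q' \supset \overline{Q}^m$ via a bounded extension operator $W^{s, p}(Q^m) \to W^{s, p}(Q')$. Let $\varphi$ be a smooth radial mollifier, set $\varphi_t(x) = t^{-m} \varphi(x/t)$, and define $u_t = \varphi_t * u$ on $\overline{Q}^m$ for $t$ small. By standard regularization, $u_t \in C^\infty(\overline{Q}^m; \R^\nu)$ and $u_t \to u$ in $W^{s, p}(Q^m; \R^\nu)$.

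The next step is to establish \eqref{contdist}. When $sp > m$, the Morrey-Sobolev imbedding gives a continuous representative of $u$, and $u_t \to u$ uniformly; since $u(x) \in N^n$, we get $\dist(u_t(x), N^n) \leq \abs{u_t(x) - u(x)} \to 0$ uniformly on $\overline{Q}^m$. When $sp = m$, I would invoke the continuous imbedding of $W^{s, p}$ into $\mathrm{VMO}$. Writing $u_t(x)$ essentially as an average of $u$ over a ball of radius $\sim t$ around $x$, the vanishing mean oscillation property implies that on each small ball $u$ is close in mean to some value in $N^n$, whence $\dist(u_t(x), N^n) \to 0$ uniformly.

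By \eqref{contdist}, for all sufficiently small $t$ the range of $u_t$ lies in a tubular neighborhood $U$ of $N^n$ on which the nearest point projection $\Pi : U \to N^n$ is smooth with bounded derivatives of all orders. Defining $v_t = \Pi \circ u_t$ produces a map in $C^\infty(\overline{Q}^m; N^n)$. Since $\Pi$ is the identity on $N^n$, we have $v_t - u = \Pi \circ u_t - \Pi \circ u$ almost everywhere.

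The main obstacle is showing that $v_t \to u$ in $W^{s, p}$, i.e.\ that post-composition by $\Pi$ is continuous on the subset of $W^{s, p}$ consisting of maps valued in $U$. When $s$ is an integer, this follows from the Faà di Bruno chain rule together with the uniform bounds on the derivatives of $\Pi$ on $U$ and the embedding $W^{s, p} \subset W^{k, q}$ for appropriate exponents. When $s$ is fractional, the Gagliardo seminorm of $\Pi \circ w$ is controlled using the Lipschitz bound $\abs{\Pi(a) - \Pi(b)} \leq \norm{D\Pi}_{L^\infty(U)} \abs{a - b}$ for the top-order term and estimates on iterated derivatives for the lower-order terms. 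Applying this to $w = u_t - u$ (embedded into a continuous family via convex combination, or using known composition results for Sobolev maps with smooth outer function, as in Runst-Sickel or Brezis-Mironescu), one concludes $\norm{\Pi \circ u_t - \Pi \circ u}_{W^{s, p}(Q^m)} \to 0$, which completes the proof.
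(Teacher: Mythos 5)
Your proposal is correct and follows essentially the same route as the paper: mollify, establish \eqref{contdist} via the Morrey--Sobolev imbedding when $sp>m$ and via the imbedding into VMO when $sp=m$, project back with the nearest point projection $\Pi$, and conclude by the continuity of composition with the smooth map $\Pi$ on $W^{s,p}\cap L^\infty$ (the paper likewise relies on the Brezis--Mironescu / Maz\cprime ya--Shaposhnikova composition results for this last step). No gaps.
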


The case where $sp<m$ is more subtle and the answer depends on the topology of \(N^n\). 
Even when $N^n$ is the unit sphere \(\S^n\) the approximation problem is not fully understood. 
For instance, consider the map \(u : B^3 \to \S^2\) defined by
\[
u(x) = \frac{x}{|x|}.
\] 
Then, $u\in W^{s, p}(B^3; \S^2)$ for every \(s > 0\) and \(p \ge 1\) such that \(sp < 3\), but $u$ cannot be strongly approximated in $W^{s, p}$ by smooth maps with values into $\S^2$ when \(2 \le sp < 3\). 
This example originally due to Schoen and Uhlenbeck \cite{SchoenUhlenbeck} for \(s = 1\) can be adapted to the case where \(\S^2\) is replaced by any compact manifold \(N^n\) and for any value of \(s\) \citelist{\cite{Escobedo}*{Theorem~3} \cite{Mironescu}*{Theorem~4.4}}.

\begin{theorem}
\label{theoremNonDensity}
If \(sp < m\) and \(\pi_{\floor{sp}}(N^n) \not= \{0\}\), then $C^{\infty}(\overline Q^m; N^n)$ is not strongly dense in $W^{s, p}(Q^m; N^n)$.
\end{theorem}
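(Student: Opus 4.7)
The plan is to produce an explicit obstruction map in $W^{s,p}(Q^m; N^n)$ which cannot be strongly approximated by smooth $N^n$-valued maps, then reach a contradiction by slicing and a topological argument.

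Set $k = \floor{sp}$, so $k < k + 1 \le m$ and $sp < k + 1$ in every case. Since $\pi_k(N^n) \ne \{0\}$, pick a smooth representative $f : \S^k \to N^n$ of a non-trivial homotopy class. Fix an interior point $x_0 = (0, x_0'') \in Q^m$, identifying $\R^m = \R^{k+1} \times \R^{m-k-1}$, and on a small open ball $B \subset Q^m$ around $x_0$ define
\[
u(x', x'') = f\Bigl(\frac{x'}{\abs{x'}}\Bigr);
\]
extend $u$ outside $B$ to a smooth $N^n$-valued map on $Q^m$, which is possible because $f$ can be joined to a constant inside $N^n$ after crossing a collar. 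The singular set of $u$ is the affine plane $\{x' = 0\} \cap B$, whose codimension is $k + 1 > sp$. Using $\abs{D^j u(x)} \lesssim \abs{x'}^{-j}$ for $j \le \lceil s \rceil$ one gets $u \in W^{k, p}(Q^m; N^n)$, and the finite-difference / Gagliardo seminorm check confirms $u \in W^{s, p}(Q^m; N^n)$ in the fractional case.

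Suppose by contradiction that there exist $u_j \in C^\infty(\overline{Q}^m; N^n)$ with $u_j \to u$ in $W^{s, p}(Q^m)$. A fractional Fubini theorem applied to the product structure $\R^m = \R^{k+1} \times \R^{m-k-1}$, together with extraction of a subsequence, gives for almost every $x'' \in Q^{m-k-1}$ near $x_0''$ that $u_j(\cdot, x'') \to u(\cdot, x'')$ in $W^{s, p}(Q^{k+1}; N^n)$. Fix such an $x''$. A second slicing, this time along the coarea foliation of $\R^{k+1}$ by the spheres $r\S^k$, produces (for a.e.\ small $r > 0$) $W^{s, p}$-convergence of the restrictions $u_j|_{r\S^k \times \{x''\}}$ to $u|_{r\S^k \times \{x''\}}$, and the latter is a reparametrisation of $f$.

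On $r\S^k$ we have $sp \ge k = \dim r\S^k$, so Theorem~\ref{premiertheorem} applies on the sphere: every $W^{s, p}(r\S^k; N^n)$ map lies in $\mathrm{VMO}$ and hence admits a well-defined homotopy class, continuous with respect to the $W^{s, p}$ topology (Brezis--Nirenberg). Thus $[u_j|_{r\S^k \times \{x''\}}] = [f] \ne 0$ for $j$ large. But $u_j|_{r\S^k \times \{x''\}}$ bounds the smooth $N^n$-valued map $u_j|_{r\overline{B}^{k+1} \times \{x''\}}$ on the disk, so it is null-homotopic, a contradiction.

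The main obstacle is to justify the two slicing steps in the fractional regime: passing from $W^{s, p}$ convergence on $Q^m$ to $W^{s, p}$ convergence on almost every $(k + 1)$-dimensional slice, then on almost every $k$-sphere inside such a slice. For integer $s$ both reductions are classical via Fubini and the coarea formula; for non-integer $s$ one has to handle the Gagliardo double integrals by Fubini and use the coarea formula combined with a trace-type estimate on spheres, which is technical but standard.
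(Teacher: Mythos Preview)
The paper does not supply its own proof of this theorem; it is stated with references to Schoen--Uhlenbeck, Escobedo and Mironescu. Your argument is essentially the one found in those references: build a map with a codimension-\((\floor{sp}+1)\) singularity modelled on a nontrivial \(f:\S^{\floor{sp}}\to N^n\), then slice down to \(k\)-spheres and invoke the VMO homotopy theory there to reach a contradiction.

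One step, however, is wrong as written. Your extension ``because \(f\) can be joined to a constant inside \(N^n\) after crossing a collar'' cannot work. If \(B\) is a ball centred at \((0,x_0'')\) and \(m>k+1\), then \(\partial B\) meets the plane \(\{x'=0\}\), so \(u\) is not even continuous on \(\partial B\) and there is nothing smooth to extend. If \(m=k+1\), then \(u\vert_{\partial B}\) is a rescaled copy of \(f\), which by hypothesis is \emph{not} null-homotopic in \(N^n\), so it certainly cannot be homotoped to a constant. The remedy is simply not to extend: define \(u(x',x'')=f(x'/\abs{x'})\) globally on \(Q^m=Q^{k+1}\times Q^{m-k-1}\), with singular set \(\{0\}\times Q^{m-k-1}\). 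The derivative bound \(\abs{D^j u}\lesssim \abs{x'}^{-j}\) and the codimension \(k+1>sp\) then give \(u\in W^{s,p}(Q^m;N^n)\) directly.

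Your closing paragraph is honest: the two slicing steps (Fubini onto \((k+1)\)-dimensional slices, then coarea onto spheres) are indeed where the work lies in the fractional regime, and the cited references carry them out.
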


It seems that the topological condition \(\pi_{\floor{sp}}(N^n) \not= \{0\}\) is the only obstruction to the strong density of smooth maps in $W^{s, p}(Q^m; N^n)$.
This is indeed true when \(s\) is an integer by a remarkable result of Bethuel~\citelist{\cite{Bethuel}*{Theorem~1} \cite{Hang-Lin}} for \(s=1\) which has been recently generalized by the authors \cite{Bousquet-Ponce-VanSchaftingen}*{Theorem~4} for any \(s \in \N\) (see also \cite{Gastel-Nerf}):

\begin{theorem}
\label{theoremStrongDensityInteger}
Let \(s \in \N_*\).
If  \(sp < m\) and \(\pi_{\floor{sp}}(N^n) = \{0\}\), then $C^{\infty}(\overline Q^m; N^n)$ is  strongly dense in $W^{s, p}(Q^m; N^n)$.
\end{theorem}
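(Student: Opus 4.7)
The plan is to adapt the grid decomposition strategy pioneered by Bethuel for \(s=1\) and later extended by the authors to all positive integers \(s\). The central difficulty is that when \(sp<m\) the Morrey--Sobolev embedding fails, so smoothing \(u\) by convolution no longer produces a map with values in a tubular neighborhood of \(N^n\) and one cannot simply project back onto \(N^n\). The approximation is therefore built in two stages: an approximation by maps that are smooth outside a small neighborhood of a dual skeleton of dimension \(m-\floor{sp}-1\), followed by a desingularization using the hypothesis \(\pi_{\floor{sp}}(N^n)=\{0\}\).

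First, fix a scale \(\eta>0\) and a cubication of \(Q^m\) into cubes of sidelength \(\eta\). By a Fubini-type averaging over translations of the grid, the grid is chosen so that the \(W^{s,p}\) seminorm of \(u\) on a tubular neighborhood of the \(\floor{sp}\)-skeleton is controlled by \(\norm{u}_{W^{s,p}(Q^m)}\). Each top-dimensional cube is then classified as good or bad according to whether the local \(W^{s,p}\) seminorm of \(u\), after the scale-invariant rescaling by \(\eta^{sp-m}\), is small or not. The total measure of the bad cubes is then bounded by a constant multiple of \(\eta^{sp}\,\norm{u}_{W^{s,p}}^p\) and tends to \(0\) as \(\eta\to 0\). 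On good cubes \(u\) has small mean oscillation, so \(\varphi_\eta\ast u\) takes values in a tubular neighborhood of \(N^n\) and the nearest point projection \(\Pi\) yields a smooth \(N^n\)-valued approximation of \(u\).

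On bad cubes one first applies an opening operation near the \(\floor{sp}\)-skeleton which, at the cost of a controlled increase in \(W^{s,p}\) norm, makes \(u\) locally constant in the directions transverse to the skeleton and hence continuous on it. Since \(\pi_{\floor{sp}}(N^n)=\{0\}\), the resulting continuous boundary datum on each \((\floor{sp}+1)\)-face of each bad cube is null-homotopic and extends continuously into \(N^n\); iterating across dimensions and producing a homogeneous extension from the dual \((m-\floor{sp}-1)\)-skeleton yields a map that is smooth on each bad cube away from that skeleton. Gluing this with the projected smoothing on good cubes gives a map in \(W^{s,p}(Q^m;N^n)\) which is smooth except on a set of codimension strictly larger than \(sp\); a further local smoothing followed by projection then produces the desired smooth approximation of \(u\).

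The main obstacle is the quantitative control of the \(W^{s,p}\) norm through opening, homogeneous extension and desingularization when \(s\geq 2\). The opening composes \(u\) with non-smooth retractions toward the skeleton, and applying \(D^s\) to such a composition generates terms of every derivative order up to \(s\) mixing \(L^p\) and \(L^{sp}\) norms. Bounding them requires interpolation inequalities of Gagliardo--Nirenberg type combining \(W^{s,p}\) and \(W^{1,sp}\), together with the scaling \(\eta^{sp-m}\) provided by the good--bad cube dichotomy, so that the accumulated energy of the modified map on bad cubes is dominated by the \(W^{s,p}\) energy of \(u\) on a vanishing set. Closing these estimates uniformly in \(\eta\) is the main source of technical work.
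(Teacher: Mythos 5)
First, a point of orientation: the paper does not prove Theorem~\ref{theoremStrongDensityInteger} at all --- it quotes it from Bethuel (for \(s=1\)) and from the authors' companion paper for general integer \(s\). What the present paper proves with its own tools is the weaker Theorem~\ref{deuxiemetheorem}, under the strictly stronger hypothesis that \(N^n\) is \(\floor{sp}\) \emph{simply connected} (all of \(\pi_0,\dots,\pi_{\floor{sp}}\) trivial), and by a completely different method: a global smooth retraction \(\eta:\R^\nu\to N^n\) equal to the identity on \(N^n\) outside a set \(K\) with \(\cH^n(K)\lesssim\epsilon^{\floor{sp}+1}\) and \(\norm{D^j\eta}_{L^\infty}\lesssim\epsilon^{-j}\) (Proposition~\ref{propositionSmoothProjection}), composed with \(u\) after a generic translation selected by the Federer--Fleming averaging of Lemma~\ref{lemmefubini}, with the composition controlled by chain-rule estimates in \(W^{s,p}\cap W^{1,sp}\). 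That route cannot give the theorem as stated, because the construction of the retraction (Lemma~\ref{lemmaHajlaszContinuous}) genuinely needs all homotopy groups up to order \(\floor{sp}\) to vanish. So your grid-decomposition strategy is not merely an alternative; it is the approach of the references where the theorem is actually established, and it is the only one of the two that reaches the hypothesis \(\pi_{\floor{sp}}(N^n)=\{0\}\) alone. What the paper's method buys in exchange is a much shorter argument with no opening, no homogeneous extension and no singularity removal.

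Within your sketch there is, however, a gap in how the topological hypothesis is deployed. You propose to use \(\pi_{\floor{sp}}(N^n)=\{0\}\) on each bad cube to replace the datum on the \((\floor{sp}+1)\)-faces by a continuous extension obtained from a null-homotopy. A continuous extension produced this way carries no quantitative \(W^{s,p}\) bound, so inserting it at scale \(\eta\) on every bad cube destroys the uniform energy control you need as \(\eta\to0\). In the Bethuel-type argument the density of the class of maps smooth outside an \((m-\floor{sp}-1)\)-dimensional dual skeleton requires \emph{no} assumption on \(N^n\) (opening plus purely homogeneous extension, whose energy is controlled because \(sp<\floor{sp}+1\)); the hypothesis \(\pi_{\floor{sp}}(N^n)=\{0\}\) enters only afterwards, to remove the residual singularities by shrinking them to a scale \(r\) at which the fixed continuous filling, rescaled, contributes energy \(O(r^{\floor{sp}+1-sp})\to0\). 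Relatedly, your final step --- ``a further local smoothing followed by projection'' near the dual skeleton --- is precisely the operation that fails when \(sp<m\): the map is not of vanishing mean oscillation near its topological singularities, so the mollification does not land in a tubular neighborhood of \(N^n\) there. The desingularization must be the place where the null-homotopy is used, not an unconditional smoothing. With those two steps reorganized (and with the opening/adaptive-smoothing estimates for \(s\ge2\) carried out, which you rightly identify as the technical core), the plan is the correct one.
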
 

Some cases of non-integer values have been investigated.
For instance when \(s=1-1/p\) in the setting of trace spaces \cites{Bethuel-1995, Mucci} and also when \(s \geq 1\) and \(N^n = \S^n\) \cites{Escobedo, Bethuel-Zheng}.
Brezis and Mironescu~\cite{Brezis-Mironescu} have announced in a personal communication a solution to the question of strong density for any \(0 < s < 1\).

All these cases give an affirmative answer to the following:

\begin{openproblem}
\label{OP1}
Let \(s \not\in \N_*\). 
If \(sp < m\) and \(\pi_{\floor{sp}}(N^n)= \{0\}\), is it true that \(C^{\infty}(\overline Q^m ;  N^n)\) is strongly dense in \(W^{s, p}(Q^m; N^n)\)?
\end{openproblem}

In this paper, we investigate Open~problem~\ref{OP1} for \(\ell\) simply connected manifolds \(N^n\):
\begin{equation}\label{conditiontopologique}
\pi_0(N^n) = \dots = \pi_{\ell}(N^n) = \{0\}.
\end{equation}

We prove the following:

\begin{theorem}
\label{deuxiemetheorem}
Let \(s \ge 1\). 
If \(sp < m\) and if \(N^n\) is \(\floor{sp}\) simply connected, then \(C^\infty(\overline Q^m; N^n)\) is strongly dense in \(W^{s, p}(Q^m; N^n)\).
\end{theorem}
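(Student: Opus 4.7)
The plan is, given $u \in W^{s,p}(Q^m; N^n)$, to approximate $u$ in $W^{s,p}$ by maps of the form $\Phi \circ u_t$, where $u_t = \varphi_t * u$ is a standard mollification (taking values in the convex hull of $N^n$) and $\Phi$ is the retraction announced in the abstract. This extends the Bethuel-type scheme used for integer $s$ in Theorem~\ref{theoremStrongDensityInteger}, but replaces the combinatorial opening on a grid by a direct pointwise fractional chain rule for the composition.

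First I would invoke the Brezis-Mironescu Gagliardo-Nirenberg interpolation: since $N^n$ is compact, every $u \in W^{s,p}(Q^m; N^n)$ is bounded, and
\[
\bignorm{Du}_{L^{sp}(Q^m)} \le C \bignorm{u}_{W^{s,p}(Q^m)}^{1/s}\bignorm{u}_{L^\infty(Q^m)}^{1 - 1/s},
\]
so in fact $u \in W^{s,p}(Q^m; N^n) \cap W^{1, sp}(Q^m; N^n)$. Next, using $\pi_0(N^n) = \dots = \pi_{\floor{sp}}(N^n) = \{0\}$, I would construct a compact $F \subset N^n$ with $\dim F \le n - \floor{sp} - 1$ and a smooth retraction $\Phi \colon \R^\nu \setminus T \to N^n$ which is the identity on $N^n \setminus T$, where $T$ is a thin closed neighborhood of $F$ in $\R^\nu$. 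Such $\Phi$ is built by extending the deformation retract of $N^n \setminus F$ onto a dual $\floor{sp}$-skeleton (null-homotopic by the topological hypothesis) to a tubular neighborhood of $N^n$ via nearest-point projection. A Fubini argument on the translation parameter of $F$, together with the fact that $u_t$ is smooth and $sp < m$, ensures that for generic $F$ the composition $\Phi \circ u_t$ is well defined for all small $t$.

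The heart of the argument is the pointwise fractional chain rule: writing $s = k + \sigma$ with $0 \le \sigma < 1$, for $\Phi \in C^{k+1}$ and $v \in W^{s,p} \cap W^{1, sp}$,
\[
\bigabs{D^s(\Phi \circ v)(x)} \le C \sum_{j = 1}^{k+1} \bigabs{(D^j \Phi)(v(x))}\, \mathcal{P}_j(v)(x),
\]
where each $\mathcal{P}_j(v)$ is a sum of products of $\abs{Dv}, \dots, \abs{D^k v}$ (and an averaged $\sigma$-th Hölder difference of $D^k v$ when $\sigma > 0$) of total ``fractional order'' equal to $s$. Combined with Hölder's inequality and the Gagliardo-Nirenberg bound of Step~1, this yields
\[
\bignorm{\Phi \circ v}_{W^{s,p}(Q^m)} \le C(\Phi) \bigl(\bignorm{v}_{W^{s,p}(Q^m)} + \bignorm{Dv}_{L^{sp}(Q^m)}^s\bigr).
\]
Applied to $v = u_t - u$ (after translating $F$ so that $\Phi$ is defined on the ranges of both $u_t$ and $u$), this gives $\Phi \circ u_t \to \Phi \circ u = u$ in $W^{s,p}$, and since $\Phi \circ u_t$ is smooth and $N^n$-valued this is the required approximation.

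The main obstacle will be the pointwise fractional chain rule itself. For non-integer $s$ there is no direct analogue of Faà di Bruno's formula, and one must estimate $D^k(\Phi \circ v)(x) - D^k(\Phi \circ v)(y)$ by decomposing it into a part controlled by $[D^k v]_{W^{\sigma, p}}$ and a part controlled by the oscillation of the $D^j \Phi$ along the segment $[v(x), v(y)]$, balancing Hölder exponents so that each $\abs{Dv}$ factor can be integrated against the $L^{sp}$ norm secured in Step~1. Achieving this with constants uniform enough to pass to the limit $t \to 0$ is the central technical hurdle.
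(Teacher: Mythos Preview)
Your overall architecture—mollify, then compose with a retraction built from the $\floor{sp}$-connectedness, and control the composition via a pointwise fractional chain rule on $W^{s,p}\cap W^{1,sp}$—is exactly the paper's strategy, and you correctly secure $u\in W^{1,sp}$ by Gagliardo--Nirenberg. The gap is in how you handle the retraction $\Phi$.

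You treat $\Phi$ as a \emph{single fixed} smooth map on $\R^\nu\setminus T$, identity on $N^n\setminus T$, and then assert $\Phi\circ u_t\to\Phi\circ u=u$. Neither half of this works as stated. If $T$ is a genuine neighborhood of $F$ in $\R^\nu$, it has positive $\nu$-measure, and no Fubini/transversality argument will make the smooth $m$-dimensional image $u_t(Q^m)$ avoid $T+\xi$ (the inequality $sp<m$ means $m\ge\floor{sp}+1$, so the dimension count goes the wrong way). If instead you shrink $T$ to the dual skeleton $F$ itself so that generically $u^{-1}(F+\xi)$ is null and $\Phi_\xi\circ u=u$ a.e., then $\Phi$ is no longer smooth with bounded derivatives: its $j$-th derivative blows up like $\dist(\cdot,F)^{-j}$, the constant $C(\Phi)$ in your chain-rule bound is infinite, and $\Phi\circ u_t$ is not a smooth $N^n$-valued map. (Also, applying the bound to ``$v=u_t-u$'' does not yield $\Phi\circ u_t-\Phi\circ u$; composition is not linear. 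What you need there is continuity of $v\mapsto\Phi\circ v$ on $W^{s,p}\cap L^\infty$, which is fine for fixed smooth $\Phi$ but is the easy half of the estimate.)

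The paper resolves this with a \emph{one-parameter family} $\eta_\epsilon:\R^\nu\to N^n$ (Proposition~\ref{propositionSmoothProjection}): each $\eta_\epsilon$ is globally smooth, equals the identity on $N^n\setminus K_\epsilon$ with $\cH^n(K_\epsilon)\le C\epsilon^{\floor{sp}+1}$, and satisfies $\norm{D^j\eta_\epsilon}_{L^\infty}\le C'\epsilon^{-j}$. One does \emph{not} have $\eta_{\epsilon,\xi}\circ u=u$; instead the fractional chain rule (Propositions~\ref{propositionPointwiseEstimate} and~\ref{propositionPointwiseEstimateWsp}) localizes the error to $\{\eta_{\epsilon,\xi}\circ u\ne u\}\subset u^{-1}(\Pi^{-1}(K_\epsilon)+\xi)$, the Federer--Fleming averaging (Lemma~\ref{lemmefubini}) controls the measure of this preimage by $\cH^n(K_\epsilon)$, and the decisive computation is the quantitative balance
\[
[\eta_\epsilon]_{C^{k+1}}^{\sigma p}\,[\eta_\epsilon]_{C^k}^{(1-\sigma)p}\,\cH^n(K_\epsilon)\ \lesssim\ \epsilon^{-(k+1)\sigma p-k(1-\sigma)p}\,\epsilon^{\floor{sp}+1}=\epsilon^{\floor{sp}+1-sp}\xrightarrow[\epsilon\to0]{}0.
\]
This trade-off between derivative blow-up and shrinking bad set is the missing idea in your proposal; without it one cannot pass from $\eta_{\epsilon,\xi}\circ(\varphi_t*u)$ back to $u$ in the $W^{s,p}$ norm.
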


Even in the case where \(s\) is an integer --- which is covered in full generality by Theorem~\ref{theoremStrongDensityInteger} --- the proof
is simpler and has its own interest.
We have been inspired by Haj{\l}asz~\cite{Hajlasz} who has proved Theorem~\ref{deuxiemetheorem} for \(s=1\).

Our proof of Theorem~\ref{deuxiemetheorem} is based on two main ingredients.
The \emph{geometric tool} (Proposition~\ref{propositionSmoothProjection}) gives a smooth retraction of the ambient space \(\R^\nu\) onto \(N^n\) except for a small subset of \(N^n\).
The \emph{analytic tool} (Proposition~\ref{propositionPointwiseEstimateWsp}) gives a pointwise estimate of the fractional derivative of \(\eta \circ u\), where \(\eta\) is a smooth map and \(u\) is a \(W^{s, p}\) map.

The counterpart of Theorem~\ref{deuxiemetheorem} for \(0 < s <1\) requires different tools and will be investigated in a subsequent paper.

\medskip

The second problem we adress in this paper concerns the \emph{weak density} of \(C^{\infty}(\overline Q^m; N^n)\)  in \(W^{s, p}(Q^m; N^n)\):
given \( u \in W^{s, p}(Q^m; N^n)\), does there exist a sequence in  \(C^{\infty}(\overline Q^m; N^n)\) which is bounded in  \(W^{s, p}(Q^m; N^n)\) and converges to \(u\) in measure?

The case \(sp \ge m\) has an affirmative answer due to the strong density of smooth maps.
When \(sp < m\), we find the same topological obstruction as for the strong density problem when \(sp\) is not an integer \cite{Bethuel}*{Theorem~3}:

\begin{theorem}
\label{theoremWeakDensityNecessary}
If \(sp < m\) is such that \(sp \not\in \N\) and if \(C^{\infty}(\overline Q^m; N^n)\) is weakly dense in \(W^{s, p}(Q^m; N^n)\), then \(\pi_{\floor{sp}}(N^n) = \{0\}\).
\end{theorem}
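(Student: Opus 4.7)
The plan is to argue by contraposition: assuming \(\pi_{\floor{sp}}(N^n) \not= \{0\}\), I will construct a map \(u \in W^{s, p}(Q^m; N^n)\) that cannot be reached as a weak limit of any sequence in \(C^\infty(\overline{Q}^m; N^n)\).

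Let \(k = \floor{sp}\). Because \(sp \notin \N\), one has the strict inequalities \(k < sp < k+1\). I would pick a smooth map \(f : \S^k \to N^n\) representing a non-trivial element of \(\pi_k(N^n)\) and define
\[
u(x', x'') = f\!\left(\frac{x'}{|x'|}\right) \qquad \text{for } (x', x'') \in (Q^{k+1} \setminus \{0\}) \times Q^{m-k-1}.
\]
The scale invariance of the dilation \(x' \mapsto f(x'/|x'|)\) together with \(sp < k+1\) yields by direct computation \(u \in W^{s, p}(Q^m; N^n)\), with singular set the \((m-k-1)\)-dimensional plane \(\{0\} \times Q^{m-k-1}\).

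Next, I would introduce a topological invariant detecting this singularity. Since \(sp > k\) strictly, there exists \(s' \in (0, s)\) with \(s' p > k\). By the Fubini-type slicing property of fractional Sobolev spaces, for almost every \(x'' \in Q^{m-k-1}\) and almost every small \(r > 0\), the restriction of \(u\) to the \(k\)-sphere \(S_{r, x''} := \partial B^{k+1}(0, r) \times \{x''\}\) lies in \(W^{s', p}(S_{r, x''}; N^n)\). Since \(s' p > k = \dim S_{r, x''}\), this restriction sits in \(\mathrm{VMO}\), and the Brezis--Nirenberg \(\mathrm{VMO}\) degree produces a well-defined element \(\deg(u|_{S_{r, x''}}) \in \pi_k(N^n)\). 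For our specific \(u\), this degree equals the class \([f]\), hence is non-trivial whenever the sphere encloses the singular plane.

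Finally, supposing for contradiction that \((u_j) \subset C^\infty(\overline{Q}^m; N^n)\) converges weakly to \(u\) in \(W^{s, p}\), the compact embedding \(W^{s, p}(Q^m) \hookrightarrow W^{s', p}(Q^m)\) (which holds because \(s' < s\)) combined with the slicing property allows extraction of a subsequence with \(u_j|_{S_{r, x''}} \to u|_{S_{r, x''}}\) strongly in \(W^{s', p}\), hence in \(\mathrm{VMO}\), for almost every \((r, x'')\). Continuity of the \(\mathrm{VMO}\) degree then forces \(\deg(u|_{S_{r, x''}}) = \lim_j \deg(u_j|_{S_{r, x''}})\); since each smooth \(u_j\) extends smoothly across \(\overline{B^{k+1}(0, r)} \times \{x''\}\), the right-hand side is zero, contradicting \(\deg(u|_{S_{r, x''}}) = [f] \not= 0\). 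The main obstacle is this last step, where the fractional Fubini theorem, Rellich-type compactness, and \(\mathrm{VMO}\) degree continuity must be combined; the hypothesis \(sp \notin \N\) enters precisely there, providing the slack needed to choose \(s' < s\) with \(s' p > k\).
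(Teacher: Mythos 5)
Your argument is essentially correct, and it is the standard obstruction argument; note that the paper itself gives no proof of this theorem but simply cites Bethuel's Acta paper (Theorem~3 there, for \(s=1\)), so your write-up is in effect the expected adaptation of that argument to fractional \(s\). The key points all check out: \(u(x',x'')=f(x'/|x'|)\) lies in \(W^{s,p}\) precisely because \(sp<\floor{sp}+1\); boundedness in \(W^{s,p}\) plus convergence in measure (the paper's definition of weak convergence) yields, via the compact embedding into \(W^{s',p}\) with \(\floor{sp}<s'p<sp\), strong \(W^{s',p}\) convergence; and the Fubini-type slicing then gives convergence of the restrictions on almost every sphere \(S_{r,x''}\), where the Morrey embedding (\(s'p>k=\dim S_{r,x''}\)) makes the restrictions continuous and their homotopy classes stable, while smooth competitors restrict to null-homotopic maps. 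Two points deserve slightly more care in a full write-up. First, since \(n\) need not equal \(k\), the invariant is the homotopy class in \(\pi_k(N^n)\) of a VMO (in fact continuous) map rather than a Brezis--Nirenberg degree; this class is still well defined and stable under uniform convergence, so nothing breaks, but the word ``degree'' is a slight abuse. Second, the Fubini/slicing property for the Gagliardo seminorm on spheres (and, when \(s'>1\), the fact that generic restrictions carry all tangential derivatives up to order \(\floor{s'}\) together with the fractional seminorm of the top one) is standard but not completely trivial and should be stated or referenced explicitly. You correctly identify that the hypothesis \(sp\notin\N\) is exactly what allows the choice of \(s'<s\) with \(s'p>\floor{sp}\), which is where the argument (and the theorem) fails for integer \(sp\).
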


From Theorem~\ref{theoremStrongDensityInteger}, it follows that for every \(s \in \N_*\) such that \(sp \not\in \N\) the problems of weak and strong density of smooth maps in \(W^{s, p}(Q^m; N^n)\) are equivalent.
We expect the same is true for \(s \not\in \N\).

The conclusion of Theorem~\ref{theoremWeakDensityNecessary} need not be true when \(sp\) is an integer. 
For instance, by a result of Bethuel~\cite{Bethuel-1990}*{Theorem~3}, \(C^{\infty}(\overline Q^3 ; \S^2)\) is weakly dense in \(W^{1,2}(Q^3 ; \S^2)\), even though it is not strongly dense by Theorem~\ref{theoremNonDensity}.
 
As a byproduct of the tools we use to prove Theorem~\ref{deuxiemetheorem}, we establish the following:

\begin{theorem}\label{theoremweakdensity}
Let \(s \geq 1\). 
If  \(sp < m\) is such that \(sp \in \N\) and if \(N^n\) is \(sp-1\) simply connected, then \(C^{\infty}(\overline Q^m; N^n)\) is weakly dense in \(W^{s, p}(Q^m; N^n)\).
\end{theorem}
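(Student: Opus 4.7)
The strategy parallels the proof of Theorem~\ref{deuxiemetheorem}, differing only in the application of the geometric tool with one fewer degree of connectivity and in accepting bubble-type residues at the critical scaling.

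First, I would apply Proposition~\ref{propositionSmoothProjection} with \(\ell = sp - 1\) in place of \(\ell = sp\). This yields a smooth retraction \(P\) from \(\R^\nu\) onto \(N^n\), defined outside a small neighbourhood of a set \(T \subset N^n\) whose dimension is one unit larger than in the strong-density argument. Fix a smooth mollifier \(\varphi_\varepsilon\) at scale \(\varepsilon\) and a generic translation of a cubeation of \(Q^m\) at the same scale. A transversality argument then shows that the composition \(v_\varepsilon := P \circ (\varphi_\varepsilon * u)\) is well defined, smooth and \(N^n\)-valued outside a singular stratum \(\Sigma_\varepsilon \subset Q^m\) of Hausdorff dimension at most \(m - sp\). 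Applying the pointwise estimate of Proposition~\ref{propositionPointwiseEstimateWsp} to \(v_\varepsilon\) and integrating over \(Q^m \setminus \Sigma_\varepsilon\) produces a uniform bound \(\norm{v_\varepsilon}_{W^{s, p}(Q^m \setminus \Sigma_\varepsilon)} \leq C \norm{u}_{W^{s, p}(Q^m)}\).

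Next, I would extend \(v_\varepsilon\) to a smooth map on all of \(\overline Q^m\) by inserting bubbles around \(\Sigma_\varepsilon\): cover \(\Sigma_\varepsilon\) by \(O(\varepsilon^{-(m - sp)})\) balls of radius comparable to \(\varepsilon\), and on each ball replace \(v_\varepsilon\) by a smooth \(N^n\)-valued extension of its trace on the boundary sphere. Because \(sp\) is an integer --- exactly the critical Sobolev exponent attached to the codimension \(sp\) of \(\Sigma_\varepsilon\) in \(Q^m\) --- each bubble has \(W^{s, p}\) norm of order \(\varepsilon^{(m - sp)/p}\), and summed over all balls the total bubble contribution is \(O(1)\): bounded, but not vanishing as \(\varepsilon \to 0\). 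The resulting map \(\tilde u_\varepsilon \in C^\infty(\overline Q^m; N^n)\) is then uniformly bounded in \(W^{s, p}\) and coincides with \(v_\varepsilon\) outside a set of measure \(O(\varepsilon^{sp})\), so that \(\tilde u_\varepsilon \to u\) in measure.

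The main obstacle I foresee is the bubble step. The trace of \(v_\varepsilon\) on each small sphere defines a homotopy class in \(N^n\) that must be realised by a smooth filling with a uniform energy bound. The hypothesis that \(N^n\) is \(sp - 1\) simply connected ensures that no obstruction arises up to the \((sp - 1)\)-skeleton of the cubeation, so that the residual obstruction lies entirely in \(\pi_{sp}(N^n)\) and can be represented by a bubble whose \(W^{s, p}\) norm is controlled via compactness of \(N^n\) together with the critical Sobolev scaling. It is precisely the tension between the bounded and the non-vanishing character of this bubble contribution that encodes the distinction between strong density (Theorem~\ref{deuxiemetheorem}, which requires \(\pi_{sp}(N^n) = \{0\}\)) and weak density (Theorem~\ref{theoremweakdensity}, which tolerates the obstruction at the price of a non-vanishing remainder).
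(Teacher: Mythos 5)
Your proposal goes off track at the very first step, and the error propagates into a genuine gap. Proposition~\ref{propositionSmoothProjection} does not produce a retraction ``defined outside a small neighbourhood of a set \(T \subset N^n\)'': it produces a \emph{globally defined} smooth map \(\eta : \R^\nu \to N^n\), which merely fails to be the identity on a small compact subset \(K\) of the target. Consequently \(\eta \circ (\varphi_t * u)\) is already a smooth \(N^n\)-valued map on all of \(\overline Q^m\); there is no singular stratum \(\Sigma_\varepsilon\) in the domain, no transversality argument, and no need to fill anything in. By introducing a fictitious singular set you are led to a bubble-insertion step that is not needed and that you do not actually carry out: realising the trace of \(v_\varepsilon\) on each small sphere by a smooth \(N^n\)-valued filling with a \emph{uniform} \(W^{s,p}\) bound is a substantial problem in its own right (for fractional \(s\) it is not a standard fact), and your appeal to the \((sp-1)\) simple connectedness to ``control the residual obstruction in \(\pi_{sp}(N^n)\)'' is an assertion, not an argument. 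You correctly identify this as the main obstacle, but identifying an obstacle is not the same as overcoming it, so the proof is incomplete as written.

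The actual argument is much closer to the proof of Theorem~\ref{deuxiemetheorem} than your sketch, and simpler than what you attempt. One applies Proposition~\ref{propositionSmoothProjection} with \(\ell = sp-1\), so that \(\cH^n(K) \le C\epsilon^{sp}\) while the derivative bounds on \(\eta\) still cost \(\epsilon^{-j}\). The Federer--Fleming averaging over translations \(\xi\) (Lemma~\ref{lemmefubini}) --- a step entirely absent from your proposal --- then yields \(\int_{B_\delta^\nu} \norm{\eta_\xi \circ u - u}_{W^{s,p}(Q^m)}^p \dif\xi \le C\,\epsilon^{sp - sp} = C\), i.e.\ bounded but not vanishing, together with \(\int_{B_\delta^\nu} \cH^m(\{\eta_\xi \circ u \ne u\}) \dif\xi \le C\epsilon^{sp} \to 0\). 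Choosing a good \(\xi_\epsilon\) gives smooth maps \(\eta_{\epsilon,\xi_\epsilon}\circ(\varphi_{t_\epsilon}*u)\) that are uniformly bounded in \(W^{s,p}\) and converge to \(u\) in measure. Your intuition that the loss of one degree of connectedness turns a vanishing error into a merely bounded one is exactly right --- that is the scaling \(\epsilon^{\floor{sp}+1-sp}\) versus \(\epsilon^{0}\) --- but the mechanism is the target-side averaging of the identity-failure set, not domain-side bubbling.
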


This result is due to Haj\l asz~\cite{Hajlasz}*{Corollary~1} when \(s=1\); Haj\l asz's argument still applies for \(p=1\) although it is not explicitly stated in his paper.
More recently, Hang and Lin~\cite{Hang-Lin-2003}*{Corollary~8.6} proved an analogue of Theorem~\ref{theoremweakdensity} under a weaker topological assumption for \(s = 1\).
To our knowledge, the only result concerning weak density of smooth maps for non-integer values of \(s\) deals with the case
\(s=1/2\), \(p=2\) and \(N=\S^1\) and is due to Rivière~\cite{Riviere}*{Theorem~1.2}.

Combining Theorem~\ref{theoremNonDensity} and Theorem~\ref{theoremweakdensity} we deduce that
\(C^{\infty}(\overline Q^m ; \S^n)\) is weakly dense but not strongly dense in \( W^{s,p}(Q^m ; \S^n)\) for \(p \le m\) and \(sp = n\).

When \(sp \in \N\),  we do not know whether  \(C^{\infty}(\overline Q^m; N^n)\) is weakly dense in \(W^{s, p}(Q^m; N^n)\) with no assumption on \(N^n\). 
The only results which are known in this sense concern \(s=1\): for \(p = 1\) \citelist{\cite{Hang}*{Theorem~1.1} \cite{Pakzad}*{Theorem~I} } and \(p=2\) \cite{Pakzad-Riviere}*{Theorem~I}.



\section{Main tools}

\subsection{Geometric tool}
Our first tool is the construction of a retraction of \(\R^\nu\) onto \(N^n\) except for a small subset of \(N^n\). 
This is the only place where the topological assumption \eqref{conditiontopologique} concerning the \(\ell\) simply connectedness of the manifold \(N^n\) comes into place.

\begin{proposition}
\label{propositionSmoothProjection}
If \(N^n\) is \(\ell\) simply connected, then for every \(0 < \epsilon \le 1\) there exist a smooth function \(\eta : \R^\nu \to N^n\) and a compact set \(K \subset N^n\) such that
\begin{enumerate}[\((i)\)]
\item for every \(x \in N^n \setminus K\), \(\eta(x) = x\),
\label{1655}
\item \(\cH^n(K) \le C \epsilon^{\ell + 1}\), for some constant \(C > 0\) depending on \(N^n\) and \(\nu\),
\label{1656}
\item for every \(j \in \N_*\),
\[
\norm{D^j \eta}_{L^\infty(\R^\nu)} \le \frac{C'}{\epsilon^j},
\]
for some constant \(C' > 0\)  depending on \(N^n\), \(\nu\) and \(j\).
\label{1657}
\end{enumerate}
\end{proposition}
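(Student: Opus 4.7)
The strategy is to locate a low-dimensional ``singular set'' $K_0 \subset N^n$ of dimension at most $n - \ell - 1$ whose complement in $N^n$ is contractible, thicken it to the $\epsilon$-neighborhood $K$ (whose $n$-dimensional Hausdorff measure will then automatically satisfy $\cH^n(K) \le C\epsilon^{\ell + 1}$), and finally extend the identity map on $N^n \setminus K$ to a smooth map $\eta : \R^\nu \to N^n$ using this contractibility. To produce $K_0$, I would invoke a standard consequence of $\ell$-simply connectedness for compact smooth manifolds: by the handle-trading technique of Smale, $N^n$ admits a smooth handle decomposition with a single $0$-handle centered at some $p_0 \in N^n$ and all remaining handles of index at least $\ell + 1$. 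Taking $K_0$ to be the union of the cocores of these higher-index handles yields a compact subset of $N^n$ expressible as a finite union of smooth embedded submanifolds of dimensions at most $n - \ell - 1$; in particular, $\cH^{n - \ell - 1}(K_0)$ is bounded by a constant depending only on $N^n$, so setting $K = \{x \in N^n : \dist_{N^n}(x, K_0) \le \epsilon\}$ and applying a Minkowski-content estimate gives property~$(ii)$. Moreover, $N^n \setminus K_0$ deformation retracts smoothly onto $\{p_0\}$ along the gradient flow of the corresponding Morse function, providing a smooth homotopy $H : (N^n \setminus K_0) \times [0, 1] \to N^n$ with $H(\cdot, 0) = \mathrm{id}$ and $H(\cdot, 1) \equiv p_0$.

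The construction of $\eta$ then proceeds in two stages. First, I would define a smooth map $F : N^n \to N^n$ by picking a smooth cutoff $\varphi : [0, \infty) \to [0, 1]$ with $\varphi(r) = 1$ for $r \le \epsilon/2$, $\varphi(r) = 0$ for $r \ge \epsilon$, and $|\varphi^{(j)}| \le C_j / \epsilon^j$, and setting $F(x) := H(x, \varphi(\dist_{N^n}(x, K_0)))$ for $x \in N^n \setminus K_0$, extended by $p_0$ on $K_0$. Then $F = \mathrm{id}$ on $N^n \setminus K$ and $F \equiv p_0$ on a neighborhood of $K_0$, so $F$ is smooth, and it is moreover smoothly null-homotopic through $G_s(x) := H(x, s + (1 - s)\varphi(\dist_{N^n}(x, K_0)))$, which satisfies $G_0 = F$ and $G_1 \equiv p_0$. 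Second, I would extend $F$ to $\R^\nu$ by a normal-collar construction: pick a tubular neighborhood $U$ of $N^n$ of fixed radius $\rho_0 > 0$ (depending only on $N^n$), parametrize each $y \in U$ as $y = x + v$ with $x = \pi_N(y)$ and $v \in (T_x N^n)^\perp$, choose a smooth cutoff $\psi : [0, \rho_0] \to [0, 1]$ equal to $0$ near $0$ and equal to $1$ near $\rho_0$ with flat endpoints, and set $\eta(y) := G_{\psi(|v|)}(x)$ on $U$ and $\eta(y) := p_0$ on $\R^\nu \setminus U$. The two definitions match smoothly at $|v| = \rho_0$, so $\eta$ is globally smooth. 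Property~$(i)$ holds since $\eta(x) = G_0(x) = F(x) = x$ on $N^n \setminus K$, and property~$(iii)$ follows because all $\epsilon$-dependence is concentrated in $\varphi$, with $|\varphi^{(j)}| \le C/\epsilon^j$, while $H$, $\psi$, $\pi_N$, and the normal coordinates depend only on $N^n$ and $\rho_0$; the chain rule then gives $\|D^j \eta\|_{L^\infty(\R^\nu)} \le C'/\epsilon^j$.

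The main technical obstacle is ensuring global smoothness near $K_0$: the deformation retraction $H$ may fail to extend smoothly across $K_0$, and $\dist_{N^n}(\cdot, K_0)$ is not itself smooth at $K_0$. I would circumvent both issues by replacing the distance function with a smooth radial coordinate in a tubular neighborhood of each smooth stratum of $K_0$ (possible since $K_0$ is a finite union of smooth submanifolds, up to handling the incidence between strata by a nested induction), and by arranging that $F \equiv p_0$ on a full smooth regular neighborhood of $K_0$, so that the possibly singular values of $H$ near $K_0$ never appear in the explicit formula for $F$ or $\eta$. A parallel care is needed at the outer rim $|v| = \rho_0$ of the tube, which is handled by the flatness of $\psi$.
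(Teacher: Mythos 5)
The foundation of your construction---``by the handle-trading technique of Smale, $N^n$ admits a smooth handle decomposition with a single $0$-handle and all remaining handles of index at least $\ell+1$''---is a genuine gap. Handle trading requires the Whitney trick and is only available for $n\ge 5$ (or $6$, depending on the formulation); in dimension $4$ the statement already fails to be known for $\ell=1$: whether every closed simply connected smooth $4$-manifold admits a handle decomposition without $1$-handles is a well-known open problem (Kirby's problem list, Problem 4.18). Since the proposition must hold for arbitrary compact $\ell$ simply connected $N^n$, your argument does not cover, e.g., $n=4$, $\ell=1$. The deeper issue is that you are asking for much more than the hypothesis gives: you want an $(n-\ell-1)$-dimensional set $K_0$ whose complement is \emph{contractible}, which is a differential-topological statement. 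The paper's proof decouples the two ingredients: it takes a cubication of $N^n$ with $\ell$-skeleton $T^\ell$ and dual skeleton $L^{n-\ell-1}$; the complement of $L^{n-\ell-1}$ retracts onto $T^\ell$ by an explicit radial map in each cube (no topological hypothesis needed, any dimension), and the $\ell$ simple connectedness is used only through White's homotopy-theoretic result to produce a globally defined Lipschitz map $\underline{\eta}:\R^\nu\to N^n$ fixing $T^\ell$, which is then composed with the radial retraction near the dual skeleton. Nothing in that argument requires the complement of the dual skeleton to be contractible.

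A secondary gap is the derivative estimate $(iii)$. You assert that ``all $\epsilon$-dependence is concentrated in $\varphi$'' because $H$ ``depends only on $N^n$''; but any deformation retraction of $N^n\setminus K_0$ onto a point has spatial derivatives blowing up near $K_0$ (nearby points on opposite sides of $K_0$ follow very different trajectories), so on the annulus $\epsilon/2\le\dist(\cdot,K_0)\le\epsilon$, where your formula actually uses $H$, you need quantitative bounds of the type $\abs{\partial_x^j H(x,t)}\le C/\dist(x,K_0)^j$. These are not automatic for a gradient-flow retraction reparametrized over $[0,1]$ and must be proved; this is exactly why the paper constructs its homotopy $f$ explicitly (Lemma~\ref{lemmaf}) and records the bound $\abs{\partial_x f}\le C'/\dist(x,L^{n-\ell-1})$, and why it then mollifies a Lipschitz $\eta$ at scale comparable to $\epsilon$ rather than relying on smoothness of the retraction. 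Your outer extension through the tubular neighborhood and the measure estimate for $K$ are fine, but the proof as written does not go through without repairing these two points.
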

 
The set \(K\) is chosen as the \(\epsilon\) neighborhood of an \(n - \ell - 1\) dimensional dual skeleton of \(N^n\). 
This proposition is the smooth counterpart of Haj\l asz's construction of a Lipschitz continuous map \(\eta\)~\cite{Hajlasz}*{Section~4}.

The proof of Proposition~\ref{propositionSmoothProjection} relies on the existence of a triangulation of the manifold \(N^n\).
It is more convenient to use a variant of the triangulation based on the decomposition of \(N^n\) in terms of cubes rather than simplices.

A cubication \(\cT\) of \(N^n\) is a finite collection of closed sets covering \(N^n\) of the form \(\Phi(\sigma)\) with \(\sigma \in \cQ\) such that 
\begin{enumerate}[\((a)\)]
\item \(\Phi : \bigcup\limits_{\sigma \in \cQ} \sigma \to N^n\) is a biLipschitz map,
\item \(\cQ\) is a finite collection of cubes of dimension \(m\) in some Euclidean space \(\R^\mu\), such that two elements of \(\cQ\) are either disjoint or intersect along a common face of dimension \(\ell\) for some \(\ell \in \{0, \dots, n\}\). 
\end{enumerate}
Given \(\ell \in \{0, \dots, n\}\), we denote by \(T^\ell\) the union of all \(\ell\) dimensional faces of elements of \(\cT\); 
we call \(T^\ell\) the \(\ell\) dimensional skeleton of \(\cT\).

We recall the following lemma~\cite{Hajlasz}*{Lemma~1}:

\begin{lemma}
\label{lemmaHajlaszContinuous}
Let \(\cT\) be a cubication of \(N^n\) and let \(T^\ell\) be the \(\ell\) dimensional skeleton of \(\cT\).
If \(N^n\) is \(\ell\) simply connected,  then there exists a Lipschitz continuous function \(\underline{\eta} : \R^\nu \to N^n\) such that for every \(x \in T^\ell\), \(\underline{\eta}(x) = x\).
\end{lemma}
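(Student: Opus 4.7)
The plan is to extend the identity on $T^\ell$ to a Lipschitz map $\R^\nu \to N^n$ by working skeleton by skeleton on a triangulation $\cT'$ of $\R^\nu$ that contains $T^\ell$ as a subcomplex. First I would choose such a $\cT'$ with sufficiently small mesh, and fix a tubular neighborhood $U$ of $N^n$ on which the nearest-point projection $\Pi \colon U \to N^n$ is a smooth (hence Lipschitz) retraction.

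On the $0$-skeleton I would define $\underline{\eta}$ as the identity on vertices belonging to $T^\ell$ and as a fixed point $p_0 \in N^n$ on all the other vertices. Then I would extend to the $k$-skeleton of $\cT'$ for $k \le \ell + 1$ by induction on $k$: on each $k$-simplex $\sigma$ not contained in $T^\ell$, the restriction $\underline{\eta}|_{\partial \sigma} \colon \partial \sigma \to N^n$ is already defined, and viewed as a continuous map $S^{k-1} \to N^n$ it represents an element of $\pi_{k-1}(N^n)$ which is trivial because $k - 1 \le \ell$ and $N^n$ is $\ell$ simply connected. The map therefore extends continuously to $\sigma$, and a standard Lipschitz approximation argument (cellular approximation into $N^n$ followed by smoothing, for instance) upgrades this to a Lipschitz extension with a uniformly controlled constant.

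For $k$-simplices with $k > \ell + 1$ the homotopical obstruction need not vanish, and I would proceed differently: extend $\underline{\eta}|_{\partial \sigma}$ Lipschitzly to an $\R^\nu$-valued map on $\sigma$ (by a cone construction from the barycenter, or by a componentwise Kirszbraun--McShane type extension), and then post-compose with $\Pi$. Provided the mesh of $\cT'$ is small enough compared with the Lipschitz constants accumulated so far and with the reach of $U$, this auxiliary extension remains inside $U$, so the composition with $\Pi$ yields a Lipschitz map $\sigma \to N^n$ matching the prescribed boundary values. To define $\underline{\eta}$ on the whole of $\R^\nu$, and not only on a bounded subcomplex, I would finally compose with a Lipschitz radial retraction of $\R^\nu$ onto a sufficiently large cube that contains $T^\ell$.

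The main difficulty I anticipate is the quantitative bookkeeping of the Lipschitz constants through the inductive steps, rather than the topology: a blow-up of the constant would invalidate the tubular-neighborhood argument at the high-dimensional step, since the auxiliary $\R^\nu$-valued extension would then escape $U$ and the projection $\Pi$ would be undefined. Keeping track of these constants is what forces the choice of a sufficiently fine triangulation $\cT'$ at the start, and is where I expect the bulk of the verification work to lie.
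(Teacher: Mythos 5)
Your skeleton-by-skeleton extension runs into a fatal obstruction at the step you yourself flag, and it is not a matter of bookkeeping: it cannot be repaired by refining the triangulation. The problem is that the oscillation of the partially built map over a \emph{single} simplex adjacent to \(T^\ell\) is of order \(\mathrm{diam}(N^n)\), independently of the mesh. Indeed, your \(0\)-skeleton data sends a vertex \(v \in T^\ell\) to \(v\) and a neighbouring vertex \(w \notin T^\ell\) to the fixed point \(p_0\), so already on the edge \([v,w]\) the map must travel a distance up to \(\mathrm{diam}(N^n)\); this large oscillation propagates to the boundary of every higher-dimensional simplex meeting a neighbourhood of \(T^\ell\). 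Consequently, at dimensions \(k > \ell+1\) the cone (or Kirszbraun) extension of \(\underline{\eta}|_{\partial\sigma}\) into \(\R^\nu\) has image in the convex hull of a subset of \(N^n\) of diameter comparable to \(\mathrm{diam}(N^n)\), which escapes every tubular neighbourhood \(U\) no matter how fine \(\cT'\) is (think of \(N^n = \S^2\): the convex hull contains the centre of the sphere). The alternative of extending continuously \emph{into} \(N^n\) is genuinely obstructed, since \(\pi_{k-1}(N^n)\) need not vanish for \(k-1 > \ell\) and the boundary maps produced by your induction have no reason to be null-homotopic. A secondary issue is that \(T^\ell\) is only a bi-Lipschitz (not piecewise linear) image of a polyhedron in \(\R^\nu\), so it cannot literally be a subcomplex of a triangulation of \(\R^\nu\); this is fixable but needs to be addressed.

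The paper avoids the dimension range \(k > \ell+1\) altogether by a global argument. One forms the cone \(CT^\ell \subset \R \times \R^\nu\) over \(T^\ell\); being contractible, it admits a continuous map \(\xi : \R^\nu \to CT^\ell\) with \(\xi(x) = (1,x)\) on \(T^\ell\). The hypothesis that \(N^n\) is \(\ell\) simply connected is used only once, through White's theorem, to produce a homotopy \(H : [0,1] \times N^n \to N^n\) from a map constant on \(T^\ell\) to the identity; this descends to \(\overline H : CT^\ell \to N^n\), and \(\overline H \circ \xi\) is a continuous map \(\R^\nu \to N^n\) equal to the identity on \(T^\ell\). The Lipschitz upgrade is then done in one global step: interpolate between the identity near \(T^\ell\) and a Lipschitz approximation \(h\) of \(\overline H \circ \xi\) via a cutoff, and post-compose with the nearest point projection \(\Pi\) — here the projection trick is legitimate because \(h\) is uniformly close to the \(N^n\)-valued map \(\overline H \circ \xi\), so the interpolant stays within reach of \(N^n\). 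If you want to keep a cell-by-cell philosophy, you would have to import something equivalent to White's homotopy to kill the obstructions above dimension \(\ell+1\); as written, your induction cannot get past them.
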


\begin{proof}
Let \(CT^\ell \subset \R \times \R^\nu\) denote the cone
\[
\bigl\{ (\lambda, \lambda x) \in \R \times \R^\nu :  \lambda \in [0, 1] \text{ and }x \in T^\ell \bigr\}.
\]
Since \(CT^\ell\) is contractible, there exists a continuous map \(\xi : \R^\nu \to CT^\ell\) such that for every \(x \in T^\ell\), \(\xi(x) = (1, x)\).

We may choose \(\xi\) to be uniformly continuous.
Indeed, if \(p : \R^\nu \to \R^\nu\) is any Lipschitz function such that \(p \) coincides with the identity on \(T^\ell\) and \(p\) is constant outside some ball containing \(T^\ell\), then for every \(x \in T^\ell\), \(\xi \circ p(x) = (1, x)\) and, in addition, \(\xi \circ p\) is uniformly continuous.
Replacing \(\xi\) by \(\xi \circ p\) if necessary, we assume in the sequel that \(\xi\) itself is uniformly continuous.

Since \(N^n\) is \(\ell\) simply connected, the identity map in \(N^n\) is homotopic to a continuous map in \(N^n\) which is constant on \(T^\ell\) \cite{White}*{Section~6}. 
More precisely, there exist a continuous map \(H : [0, 1] \times N^n \to N^n\) and \(a \in N^n\) such that
\begin{enumerate}[\((a)\)]
\item for every \(x \in T^\ell\), \(H(0, x) = a\),
\item for every \(x \in N^n\), \(H(1, x) = x\).
\end{enumerate}
Since \(H\) is constant on \(\{0\} \times T^\ell\), \(H\) induces a continuous quotient map \(\overline H : CT^\ell \to N^n\) defined for every \((\lambda, \lambda x) \in CT^\ell\) by \(\overline H(\lambda, \lambda x) = H(\lambda, x)\).
Then, \(\overline{H} \circ \xi\) is a uniformly continuous map with values into \(N^n\) which coincides with the identity map on \(T^\ell\).

Using a standard approximation argument, we may construct a Lipschitz map having the same properties. 
We present the argument for the sake of completeness.

Given \(\iota > 0\), let \(\theta : \R^\nu \to [0, 1]\) be a Lipschitz continuous function supported in a neighborhood of \(T^\ell\) such that 
\begin{enumerate}[\((a')\)]
\item for every \(x \in T^\ell\), \(\theta(x) = 1\),
\item for every \(x \in \supp{\theta}\), \(\abs{x -\overline H \circ \xi(x)} \le \iota\).
\end{enumerate}
Since \(\overline H \circ \xi\) is uniformly continuous, there exists a Lipschitz approximation  \(h: \R^\nu \to \R^\nu\) such that for every \(x \in \R^\nu\),
\[
\abs{h(x) - \overline H \circ \xi(x)} \le \iota.
\]
Then, for every \(x \in \R^\nu\),
\[
\bigabs{\overline H \circ \xi(x) - \big( \theta(x) x + (1 - \theta(x)) h(x) \big)} \le \iota.
\]
Since \(\overline H \circ \xi(x) \in N^n\), it follows that 
\[
\theta(x) x + (1 - \theta(x)) h(x) \in N^n + \overline B^\nu_{\iota},
\]
where \(\overline B_\iota^\nu\) is the closed ball in \(\R^\nu\) of radius \(\iota\) centered at \(0\).
Choosing \(\iota\) such that the nearest point projection \(\Pi : N^n + \overline B^\nu_{\iota} \to N^n\) is well-defined and smooth, then we have the conclusion by taking \(\underline{\eta} : \R^\nu \to N^n\) defined for \(x \in \R^\nu\) by
\[
\underline{\eta}(x)
= \Pi\big( \theta(x) x + (1 - \theta(x)) h(x) \big).
\]
The proof is complete.
\end{proof}

We shall also use dual skeletons associated to a cubication \(\cT \) given by a map \(\Phi : \bigcup\limits_{\sigma \in \cQ} \sigma \to N^n.\)  
We  first define dual skeletons  for a cube in \(\R^n\). 
Let  \(j \in \{0, \dots, n\}\). 
When the center of the cube is \(0\) and the faces are parallel to the coordinate axes,  the dual skeleton of dimension \(j\) is the set of points in the cube which have at least \(n - j\) components equal to zero. 
By using an isometry, we can define the dual skeleton of a cube of dimension \(n\) in \(\R^\mu\) in general position. 
Then, the dual skeleton of dimension \(j\) of a family  \(\cQ\) of cubes   as  above is simply the union of the dual skeletons  of dimension \(j\) of each cube. Finally, the dual skeleton \(L^j\) of dimension \(j\)  of the  cubication \(\cT\) of \(N^n\) is the image by \(\Phi\) of the \(j\) dimensional  dual skeleton of \(\cQ\).
 
The following lemma implies the homotopy equivalence between the skeleton \(T^\ell\) of the manifold \(N^n\) and the complement of the dual skeleton \(L^{n - \ell - 1}\) in \(N^n\).
We are particularly interested in the pointwise estimates of the homotopy \(f\):

\begin{lemma}\label{lemmaf}
Let \(\ell \in \{0, \dots, n-1\}\), let \(\cT\) be a cubication of \(N^n\) and let \(L^{n - \ell - 1}\) be the \(n - \ell - 1\) dimensional dual skeleton  of \(\cT\). 
Then, there exists a locally Lipschitz continuous function 
\[
f : [0, 1] \times (N^n \setminus L^{n - \ell - 1} ) \to N^n
\]
such that
\begin{enumerate}[\((i)\)]
\item for every \(t \in [0, 1]\) and for every \(x \in T^\ell\), \(f(t, x) = x\),
\item  for every \(x \in N^n \setminus L^{n - \ell - 1}\), \(f(0, x) = x\) and \(f(1, x) \in T^\ell\),
\item for every \(t \in [0, 1]\) and for every \(x \in N^n \setminus L^{n - \ell - 1}\),
\[
\abs{\partial_t f(t, x)} \le C,
\]
and
\[
\abs{\partial_x f(t, x)} \le \frac{C'}{\dist{(x, L^{n - \ell - 1})}},
\]
for some constants \(C, C' > 0\) depending on \(n\), \(\ell\), \(N^n\) and \(\cT.\) 
\end{enumerate}
\end{lemma}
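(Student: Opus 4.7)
The plan is to construct $f$ by iterated radial projections that push the complement of the dual skeleton down through the skeleta of the cubication, from dimension $n$ down to dimension $\ell$. At each dimension $k$ with $\ell + 1 \le k \le n$, every $k$-face $\tau$ of the cubical complex $\bigcup_{\sigma \in \cQ} \sigma \subset \R^\mu$ has a barycenter $c_\tau$, and in a local frame adapted to $\tau$ the point $c_\tau$ has exactly $k \ge \ell + 1$ vanishing coordinates; hence $c_\tau \in L^{n - \ell - 1}$ (via $\Phi$). This is what makes the construction possible: each radial projection is centered at a point of the dual skeleton, so points we are allowed to move never lie on the center of projection.

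First I would reduce to the cubical complex via $\Phi$: a homotopy with the required properties there pushes forward under $\Phi$ to a homotopy on $N^n$ with the same qualitative bounds, modulo constants depending on the biLipschitz constant of $\Phi$. Then I would partition $[0,1]$ into $n - \ell$ equal subintervals $I_n, I_{n-1}, \dots, I_{\ell + 1}$ and, on $I_k$, perform simultaneously on each $k$-face $\tau$ the straight-line homotopy from $x$ to $\pi_\tau(x)$, where $\pi_\tau(x)$ is the first intersection with $\partial \tau$ of the ray from $c_\tau$ through $x$. At the end of stage $I_k$ the image lies in the $(k-1)$-skeleton, and at the end of stage $I_{\ell+1}$ it lies in $T^\ell$. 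Continuity across shared faces is automatic: if $\tau$ and $\tau'$ are adjacent $k$-faces sharing a $(k-1)$-face $F$, then inside $\tau$ the radial projection $\pi_\tau$ is the identity on $F \subset \partial \tau$, and similarly in $\tau'$, so stage $k$ does nothing on $F$; the deformation of $F$ is postponed to stage $k - 1$. Properties $(i)$ and $(ii)$ are then direct: a point of $T^\ell$ is always on the boundary of every higher-dimensional face containing it, so it is fixed at every stage.

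For the quantitative bounds $(iii)$, the straight-line homotopy has $|\partial_t f|$ bounded by the diameter of the face, which is uniformly controlled by $\cT$. The spatial gradient of $\pi_\tau$ (and hence of the straight-line interpolation towards it) is of order $1 / \abs{x - c_\tau}$, and since $c_\tau \in L^{n - \ell - 1}$ we have $\abs{x - c_\tau} \ge \dist{(x, L^{n - \ell - 1})}$, giving the desired $C'/\dist{(x, L^{n - \ell - 1})}$ bound on each subinterval $I_k$.

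The main obstacle is checking that this estimate survives the composition of stages: once stage $n$ sends $x$ to some $y \in T^{n-1}$, the gradient of stage $n-1$ at $y$ is of order $1/\dist{(y, L^{n - \ell - 1})}$, and we must compare $\dist{(y, L^{n - \ell - 1})}$ with $\dist{(x, L^{n - \ell - 1})}$. The key observation is that the dual skeleton is stable under each radial projection: if a point of $\tau$ has some coordinates equal to zero in the frame adapted to $\tau$, those coordinates remain zero under scalar rescaling from $c_\tau$, so $\pi_\tau$ maps $L^{n - \ell - 1} \cap \tau$ into itself. Combined with the fact that $\pi_\tau$ restricted to a compact subset away from $c_\tau$ is biLipschitz with constants depending on $\dist{(\cdot, c_\tau)}$, this shows that $\dist{(\cdot, L^{n - \ell - 1})}$ is distorted only by a multiplicative factor under each stage, so the bound propagates with a constant depending on $n$, $\ell$, $N^n$, and $\cT$.
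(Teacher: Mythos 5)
Your construction is sound --- indeed the composition of your radial projections from the barycenters is exactly the retraction the paper writes in closed form --- but the justification of the spatial gradient bound in \((iii)\), which you correctly single out as the main point, does not deliver the stated estimate. At a time belonging to stage \(k\), the chain rule gives a product
\[
\bigl((1-s)I + s\, D\pi_{\tau_k}(z_k)\bigr)\, D\pi_{\tau_{k+1}}(z_{k+1}) \dotsm D\pi_{\tau_n}(x),
\]
where \(z_j\) denotes the image of \(x\) after stages \(n,\dots,j+1\). Your argument bounds each factor by \(C/\abs{z_j - c_{\tau_j}} \le C/\dist(z_j, L^{n-\ell-1})\) and then uses the comparability \(\dist(z_j, L^{n-\ell-1}) \gtrsim \dist(x, L^{n-\ell-1})\) to conclude that ``the bound propagates with a constant.'' But each factor in the product blows up like \(1/\dist\), so multiplying the \(n-\ell\) per-stage bounds yields \(C/\dist(x, L^{n-\ell-1})^{\,n-\ell}\), not the first power. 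The distinction matters: Lemma~\ref{lemmaHajlaszLipschitz} uses the first-power bound to get \(\abs{D\eta} \le C/\epsilon\), and a power \(n-\ell\) would ruin the estimates \(\norm{D^j \eta}_{L^\infty} \le C/\epsilon^j\) of Proposition~\ref{propositionSmoothProjection} and hence the final exponent \(\epsilon^{\floor{sp}+1-sp}\).

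The first-power bound is in fact true for your map, but proving it requires the telescoping of the radial factors, which the inequality \(\abs{z_j - c_{\tau_j}} \ge \dist(z_j, L^{n-\ell-1})\) discards. In the model cube \([-1,1]^n\), writing \(\abs{x_{(1)}} \le \dots \le \abs{x_{(n)}}\) for the ordered coordinates, the radial coordinate of \(z_j\) in its \(j\) dimensional face is \(\abs{x_{(j)}}/\abs{x_{(j+1)}}\) (with \(\abs{x_{(n+1)}} = 1\)), so \(\norm{D\pi_{\tau_j}(z_j)} \approx \abs{x_{(j+1)}}/\abs{x_{(j)}}\) and the product from stage \(n\) down to stage \(k\) telescopes to \(1/\abs{x_{(k)}} \le 1/\abs{x_{(\ell+1)}} \approx 1/\dist(x, L^{n-\ell-1})\). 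You should either carry this telescoped product through your induction, or do what the paper does: introduce the \(1\)-Lipschitz gauge \(\abs{x}_\ell = \min_{\abs{S}=\ell+1}\max_{i\in S}\abs{x_i}\), which vanishes exactly on the dual skeleton and is comparable to \(\dist(\cdot, L^{n-\ell-1})\), define the composed retraction in one stroke by \(y_i = \sgn x_i\) if \(\abs{x_i}\ge\abs{x}_\ell\) and \(y_i = x_i/\abs{x}_\ell\) otherwise, and use the single straight-line homotopy \((1-t)x + t\phi_\ell(x)\); the bound \(\abs{D\phi_\ell}\le C/\abs{x}_\ell\) is then a one-line computation and the composition issue never arises.
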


\begin{proof}
We first establish the result when the manifold \(N^n\) is replaced by the cube \([-1, 1]^n\)  and  \(L^{n-\ell-1}\) is the dual skeleton of dimension \(n-\ell-1\) of \([-1, 1]^n\).
Following \cite{White}, we consider for every $x\in [-1, 1]^n$,
$$
|x|_\ell =\min_{\substack{S\subset \{1, \dots, n\}\\ |S|=\ell+1}}\max_{i\in S} |x_i|.
$$   
In particular, for every \(x \in [-1, 1]^n\), \(x \in L^{n - \ell - 1}\) if and only if \(\abs{x}_\ell = 0\).
The function $x \in [-1, 1]^n \mapsto |x|_\ell$ is Lipschitz continuous of constant $1$.

Let \(\phi_\ell : [-1, 1]^n \setminus L^{n - \ell - 1} \to T^\ell\) be defined for every $x \in [-1, 1]^n$ by 
\[
\phi_{\ell}(x) = (y_1, \dots, y_n),
\]
where
\[
y_i = 
\begin{cases}
\sgn{x_i}	& \text{if \(|x_i|\geq |x|_\ell\),}\\
x_i/|x|_\ell	& \text{if \(|x_i| < |x|_\ell\).}
\end{cases}
\]

The homotopy \(f : [0, 1] \times ([-1, 1]^n \setminus L^{n - \ell - 1} ) \to [-1, 1]^n\) defined by
\[
f(t, x) = (1-t)x + t \phi_\ell(x)
\]
has the required properties.

In order to prove the existence of the homotopy \(f\) for a general compact manifold \(N^n\), we perform the above construction in every cube of a given cubication  \(\Phi : \bigcup\limits_{\sigma \in \cQ} \sigma \to N^n\). If two cubes \(\sigma_1\) and \(\sigma_2\) in \(\cQ\) have a non empty intersection, then the corresponding maps \(\phi_{\ell, 1}\) and \(\phi_{\ell, 2}\) coincide on the common face \(\sigma_1 \cap \sigma_2\). Hence, we can glue together the locally Lipschitz continuous maps obtained for each cube so as to obtain a global map \(f_0\) which is defined on the entire collection of cubes in \(\cQ\). 
The conclusion follows by taking 
\[
f(t,x)=\Phi\big(f_0(t, \Phi^{-1}(x)) \big).
\qedhere
\]   
\end{proof}

We now prove a counterpart of Proposition~\ref{propositionSmoothProjection} for a Lipschitz continuous map \(\eta\):

\begin{lemma}
\label{lemmaHajlaszLipschitz}
Let \(\ell \in \{0, \dots, n-1\}\), let \(\cT\) be a cubication of \(N^n\) and \(L^{n - \ell - 1}\) be the  \(n - \ell - 1\) dimensional dual skeleton of \(\cT\) and let \(\iota > 0\) be such that the nearest point projection \(\Pi\) onto \(N^n\) is smooth on \(N^n + \overline B_{2\iota}^\nu\).
If \(N^n\) is \(\ell\) simply connected, then for every \(0 < \epsilon \le 1\) there exists a Lipschitz continuous map \(\eta : \R^\nu \to N^n\) such that
\begin{enumerate}[\((i)\)]
\item \(\eta = \Pi\) on \((N^n + B_\iota^\nu) \setminus \Pi^{-1}(L^{n - \ell - 1} + B_\epsilon^\nu)\),
\label{1648}
\item for every \(x \in \R^\nu\),
\[
\abs{D \eta(x)} \le \frac{C''}{\epsilon}.
\]
for some constant \(C'' > 0\)  depending on \(N^n\), \(\cT\) and \(\nu\).
\label{1649}
\end{enumerate}
\end{lemma}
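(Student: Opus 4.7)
The plan is to combine the Lipschitz map $\underline{\eta} : \R^\nu \to N^n$ of Lemma~\ref{lemmaHajlaszContinuous}, the homotopy $f$ of Lemma~\ref{lemmaf} and the nearest point projection $\Pi$. I first build a Lipschitz self-map $\tilde{\eta} : N^n \to N^n$ that coincides with the identity outside the tube $L^{n-\ell-1}+B_\epsilon^\nu$ and has Lipschitz constant of order $1/\epsilon$; the desired map $\eta$ will then be essentially $\tilde{\eta}\circ\Pi$ near $N^n$, suitably extended to all of $\R^\nu$.

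For the outer piece of $\tilde{\eta}$, write $r(y) := \dist(y,L^{n-\ell-1})$ and pick a piecewise affine cut-off $\theta : [0,+\infty)\to[0,1]$ with $\theta \equiv 1$ on $[0,\epsilon/2]$, $\theta \equiv 0$ on $[\epsilon,+\infty)$ and $|\theta'|\le 2/\epsilon$. On the annulus $\{y \in N^n : \epsilon/2 \le r(y) \le \epsilon\}$ I set
\[
\tilde{\eta}(y) := f(\theta(r(y)),y).
\]
Combining the bounds $|\partial_t f|\le C$ and $|\partial_x f|\le C'/r$ of Lemma~\ref{lemmaf} with $r(y)\ge \epsilon/2$ on this annulus, the chain rule yields $|D\tilde{\eta}(y)|\le C''/\epsilon$ there. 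In particular $\tilde{\eta}(y)=y$ on the outer seam $\{r(y)=\epsilon\}$, and $\tilde{\eta}(y)=f(1,y)\in T^\ell$ on the inner seam $\{r(y)=\epsilon/2\}$.

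On the inner tube $\{r(y)<\epsilon/2\}$ the bound on $|\partial_x f|$ degenerates, so a different mechanism is needed. Using the cubication $\Phi$ to pull this region back to the union of cubes in $\R^\mu$, each component of $L^{n-\ell-1}$ has a local product neighborhood of the form $F\times B_{\epsilon/2}^{\ell+1}$. In each such neighborhood I rescale the transverse ball by $2/\epsilon$ to unit scale, apply $\underline{\eta}$ in the rescaled picture, and rescale back; since $\underline{\eta}$ has an $\epsilon$-independent Lipschitz constant, this produces a map with Lipschitz constant of order $1/\epsilon$ on the inner tube. The identity property $\underline{\eta}=\mathrm{id}$ on $T^\ell$, together with $f(1,\cdot)\in T^\ell$ and an appropriate choice of the rescaled application, ensures that the inner and outer definitions of $\tilde{\eta}$ agree on the seam $\{r(y)=\epsilon/2\}$.

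With $\tilde{\eta}$ in hand I define $\eta(x) := \tilde{\eta}(\Pi(x))$ on $N^n+B_\iota^\nu$; this is Lipschitz with constant of order $1/\epsilon$ since $|D\Pi|$ is bounded independently of $\epsilon$ on $N^n+B_\iota^\nu$, and property~\ref{1648} is automatic from $\tilde{\eta}=\mathrm{id}$ on $N^n\setminus(L^{n-\ell-1}+B_\epsilon^\nu)$. I then extend $\eta$ beyond $N^n+B_{2\iota}^\nu$ by $\underline{\eta}$, with a Lipschitz interpolation on the annular region $\iota<\dist(x,N^n)<2\iota$; because $\iota$ depends only on $N^n$ and not on $\epsilon$, this extension contributes only an $\epsilon$-independent Lipschitz constant. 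The main obstacle I anticipate is the inner-tube construction: the rescaled application of $\underline{\eta}$ must simultaneously match $f(1,\cdot)$ on the inner seam and glue consistently between adjacent cubes of the cubication, which will require a careful use of the product structure of the tubular neighborhood of $L^{n-\ell-1}$ together with the identity property of $\underline{\eta}$ on $T^\ell$.
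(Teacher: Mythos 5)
Your outer annulus is exactly the paper's construction, and your diagnosis that the difficulty sits in the inner tube $\{ \dist(y, L^{n-\ell-1}) < \epsilon/2\}$ is right; but the mechanism you propose there has a genuine gap. First, the tubular neighborhood of $L^{n-\ell-1}$ does not have a global product structure $F \times B^{\ell+1}_{\epsilon/2}$: the dual skeleton is a union of affine pieces of the cubes of $\cQ$ meeting along lower-dimensional strata, and near those strata the transverse-ball picture on which your rescaling relies breaks down. Second, and more seriously, even where the product structure exists, the rescaled application of $\underline{\eta}$ must agree with $f(1,\cdot)$ on the seam $\{r = \epsilon/2\}$; but $f(1,\cdot)$ is the explicit radial-type projection $\phi_\ell$ of Lemma~\ref{lemmaf}, while $\underline{\eta}$ comes from an abstract homotopy-theoretic argument (Lemma~\ref{lemmaHajlaszContinuous}) and there is no reason the two coincide on that sphere. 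You flag this as "the main obstacle" but do not resolve it, and as stated the construction does not close.

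The paper's resolution is simpler and avoids rescaling altogether. Choose the homotopy-parameter profile $\lambda$ so that it rises from $0$ to $1$ as $\dist(\Pi(x),L^{n-\ell-1})$ decreases from $\epsilon$ to $\epsilon/2$, and then \emph{returns to $0$} as the distance decreases further below $\epsilon/4$; on the whole inner region $\{\dist(\Pi(x),L^{n-\ell-1}) \le \epsilon/2\}$ post-compose with the fixed map $\underline{\eta}$, i.e.\ set $\eta = \underline{\eta} \circ \overline f(\lambda(\cdot), \Pi(\cdot))$ there. Near $L^{n-\ell-1}$ this is just $\underline{\eta}\circ\Pi$, whose Lipschitz constant is independent of $\epsilon$, so the blow-up of $\abs{\partial_x f}$ is never invoked where it degenerates; on the seam the values $\overline f(1,\Pi(x))$ lie in $T^\ell$, where $\underline{\eta}$ is the identity, so the two definitions glue automatically with no matching condition to verify. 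If you replace your inner-tube rescaling by this device, the rest of your argument (the outer annulus estimate via $\abs{\partial_x f} \le C'/r$ with $r \ge \epsilon/2$, and the $\epsilon$-independent extension beyond $N^n + B^\nu_{2\iota}$ via $\underline{\eta}\circ\overline{\Pi}$) goes through as you describe.
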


\begin{proof}
Let \(f\) be the map given by  Lemma \ref{lemmaf}.
The extension
\[
 \Bar{f} : \big( \{0\} \times L^{n - \ell - 1} \big) \cup \big([0, 1] \times (N^n \setminus L^{n - \ell - 1} ) \big) \to N^n
\]
defined by
\[
 \Bar{f} (t, x) 
= \begin{cases}
    x & \text{if \(t = 0\)},\\
    f (t, x) & \text{if \(0 < t \le 1\),}
  \end{cases}
\]
is continuous.

Let \(\Pi\) be the nearest point projection onto \(N^n\) and denote by \(\overline \Pi : \R^\nu \to \R^\nu\) a smooth extension of \(\Pi\).
The image of \(\overline\Pi\) need not be contained in the manifold \(N^n\). 

Let \(\theta : \R^\nu \to [0, 2]\) be a Lipschitz continuous function such that
\begin{enumerate}[\((a)\)]
\item for every \(x \in N^n + B_{\iota}^\nu\), \(\theta(x) = 2\),
\item for every \(x \in \R^\nu \setminus (N^n + B_{2\iota}^\nu)\), \(\theta(x) = 0\).
\end{enumerate}

Given \(0 < \epsilon \le 1\), let \(d_\epsilon : N^n + B_{2\iota}^\nu \to \R\) be defined by
\[
d_\epsilon (x) = \frac{1}{\epsilon} \dist{(\Pi(x), L^{n - \ell - 1})}.
\]

Let \(\lambda : [0, +\infty) \to [0, 1]\) be a Lipschitz continuous function such that
\begin{enumerate}[\((a')\)]
\item for every \(t \le \frac{1}{2}\) and for every \(t \ge 2\), \(\lambda(t) = 0\),
\item \(\lambda(1) = 1\).
\end{enumerate}

Denote by \(\underline\eta : \R^\nu \to N^n\) the function given by Lemma~\ref{lemmaHajlaszContinuous}.
Let \(\eta : \R^\nu \to N^n\) be the map defined by
\[
\eta(x) =
\begin{cases}
\overline f \bigl(\lambda( \theta(x) d_\epsilon(x)), \Pi(x)\bigr)
& \text{if \(x \in N^n + B_{2\iota}^\nu\) and \(\theta(x){d_\epsilon(x)} >  1\),}\\
\underline{\eta} \circ \overline f \bigl(\lambda( \theta(x) d_\epsilon(x)), \Pi(x)\bigr)
& \text{if \(x \in N^n + B_{2\iota}^\nu\) and \(\theta(x){d_\epsilon(x)} \le  1\),}\\
\underline{\eta} \bigl(\overline{\Pi}(x)\bigr)
& \text{if \(x \not\in N^n + B_{2\iota}^\nu\)}.
\end{cases}
\]

\medskip

We first check that \(\eta\) is continuous.
For this purpose we only need to consider the borderline cases:
\begin{enumerate}[\((1)\)]
\item \(x \in N^n + B_{2\iota}^\nu\) and \(\theta(x){d_\epsilon(x)} = 1\),
\item \(x \in \partial(N^n + B_{2\iota}^\nu)\).
\end{enumerate}
In the first case, since \(\lambda(1) = 1\), \(\overline f(1, \cdot) \in T^\ell\) and \(\underline\eta\) is the identity map on \(T^\ell\), we have
\[
\begin{split}
\overline f\bigl(\lambda( \theta(x) d_\epsilon(x)), \Pi(x)\bigr)
& =
\overline f(1, \Pi(x))\\
& =
\underline\eta \bigl( \overline f(1, \Pi(x))\bigr)
=
\underline\eta \circ \overline f\bigl(\lambda( \theta(x) d_\epsilon(x)), \Pi(x)\bigr)
.
\end{split}
\]
In the second case, \(\theta(x) = 0\). 
Since \(\lambda(0) = 0\) and \(\overline{f}(0, \cdot)\) is the identity map on \(N^n\),
\[
\overline f \bigl(\lambda( \theta(x) d_\epsilon(x)), \Pi(x)\bigr)
=
\Pi(x)
=
\overline\Pi(x),
\]
whence
\[
\underline{\eta} \circ \overline f \bigl(\lambda( \theta(x) d_\epsilon(x)), \Pi(x)\bigr)
=
\underline\eta \bigl(\overline\Pi(x)\bigr).
\]

\medskip
We now check that property \((\ref{1648})\) holds.
Indeed, if  \(x \in (N^n + B_\iota^\nu) \setminus \Pi^{-1}(L^{n - \ell - 1} + B_\epsilon^\nu)\), then \(\theta(x) = 2\) and \(d_\epsilon(x) \ge 1\).
Thus,
\[
\lambda(\theta(x) d_\epsilon(x)) = 0.
\]
We then have
\[
\eta(x) 
= 
\overline f (0, \Pi(x))
=
\Pi(x).
\]

\medskip
It remains to establish property \((\ref{1649})\). 
Indeed, if \(x \not\in N^n+ B^{\nu}_{2\iota}\), then \(\eta(x) = \underline{\eta} \bigl(\overline{\Pi}(x)\bigr)\) and the conclusion follows since \(\underline{\eta}\) and \(\overline \Pi(x)\) are both Lipschitz continuous, with Lipschitz constants independent of \(\epsilon\).
If \(x \in N^n+ B^{\nu}_{2\iota}\) and \(\theta(x) d_\epsilon(x) < \frac{1}{2}\), then \(\eta(x) = \underline{\eta} \circ \Pi(x)\) and the estimate follows similarly.
Finally, if \(x \in N^n+ B^{\nu}_{2\iota}\) and \(\theta(x) d_\epsilon(x) \ge \frac{1}{2}\), then
\[
\dist{(\Pi(x), L^{n - \ell - 1})} \ge \frac{\epsilon}{4}.
\] 
By the chain rule and the estimates given by Lemma~\ref{lemmaf},
\[
\abs{D\eta(x)}
\leq \NewConstant \left( \frac{1}{\epsilon} + \frac{1}{\dist{(\Pi(x), L^{n - \ell - 1})}}\right).
\]
Combining both estimates, we get the conclusion.
The proof of the lemma is complete.
\end{proof}

We now have all tools to prove Proposition~\ref{propositionSmoothProjection}.

\begin{proof}[Proof of Proposition~\ref{propositionSmoothProjection}]
Let \(\varphi : \R^\nu \to \R\) be a smooth map  supported in the unit ball \(B_1^\nu\). 
For every \(t > 0\), let \(\varphi_t : \R^\nu \to \R\) be the function defined for \(x \in \R^\nu\) by \(\varphi_t(x) = \frac{1}{t^\nu} \varphi(\frac{x}{t})\). Let \(\iota > 0\) as in the previous lemma.

Given \(0 < \epsilon \le 1\), let \(\zeta : \R^\nu \to [0, 1]\) be a smooth function such that
\begin{enumerate}[\((a)\)]
\item for every \(x \in N^n \setminus (L^{n - \ell - 1} + B_{2 \epsilon}^\nu)\), \(\zeta(x) = 1\),
\label{1645}
\item for every \(x \not\in (N^n + B_\iota^\nu) \setminus \Pi^{-1}(L^{n - \ell - 1} + B_{\epsilon}^\nu)\), \(\zeta(x) = 0\),
\label{1646}
\item for every \(j \in \N_*\),
\[
\norm{D^j \zeta}_{L^\infty(\R^\nu)} \le \frac{\NewConstant}{\epsilon^j},
\]
where \(\SameConstant > 0\) depends on \(j\).
\end{enumerate}

Let \(\eta_\epsilon : \R^\nu \to N^n\) be the Lipschitz continuous map given by the previous lemma and let \(t > 0\) to be chosen below.

By property~\((\ref{1646})\) and by Lemma~\ref{lemmaHajlaszLipschitz}~\((\ref{1648})\), the function
\[
\zeta \eta_\epsilon + (1 - \zeta) \varphi_t * \eta_\epsilon
\]
is smooth in \(\R^\nu\) and  for every \(j \in \N_*\) there exists \(\Constant > 0\) such that
\begin{equation}
\label{eqEstimateHigherOrderDerivatives}
\bignorm{D^j\big(\zeta \eta_\epsilon + (1 - \zeta) \varphi_t * \eta_\epsilon \big)}_{L^\infty(\R^\nu)} \le {\SameConstant} \Big(1 + \frac{1}{t^j} \Big).
\end{equation}
Moreover, by property~\((\ref{1645})\) and by Lemma~\ref{lemmaHajlaszLipschitz}~\((\ref{1648})\),  for every \(x \in N^n \setminus (L^{n - \ell - 1} + B_{2 \epsilon}^\nu)\),
\[
\big(\zeta \eta_\epsilon + (1 - \zeta) \varphi_t * \eta_\epsilon \big)(x) 
=
\eta_\epsilon(x)
=
\Pi(x)
= 
x.
\]
By Lemma~\ref{lemmaHajlaszLipschitz}~\((\ref{1649})\) we have for every \(t > 0\), 
\[
\norm{\varphi_t * \eta_\epsilon - \eta_\epsilon}_{L^\infty(\R^\nu)}
\le 
t \norm{D \eta_\epsilon}_{L^\infty(\R^\nu)}
\le
t \frac{\Constant}{\epsilon}.
\]

Taking
\[
t = \frac{\iota \epsilon}{\SameConstant},
\]
it follows from the previous estimate that the image of \(\zeta \eta_\epsilon + (1 - \zeta) \varphi_t * \eta_\epsilon\) is contained in \(N^n + B_{\iota}^\nu\). 
Hence, the function \(\eta : \R^\nu \to N^n\),
\[
\eta = \Pi \circ \bigl(\zeta \eta_\epsilon + (1 - \zeta) \varphi_t * \eta_\epsilon\bigr),
\]
is well-defined and smooth. 
Property~\((\ref{1655})\) holds with
\[
K = N^n \cap (L^{n - \ell - 1} + B_{2\epsilon}^\nu).
\]
Property~\((\ref{1656})\) also holds since \(K\) is a neighborhood of \(L^{n - \ell - 1}\) in \(N^n\) whose radius is of the order of \(\epsilon\).
By estimate \eqref{eqEstimateHigherOrderDerivatives}, property~\((\ref{1657})\) is also satisfied. 
This completes the proof of the proposition.
\end{proof}


\subsection{Analytic tool}

In this section we establish pointwise estimates of derivatives and fractional derivatives of the map \(\eta \circ u\), where \(\eta\) is a smooth function and \(u\) belongs to \(W^{s, p} \cap L^\infty\).
In the case where \(s\) is an integer, this estimate follows from the classical chain rule for higher order derivatives:

\begin{proposition}
\label{propositionPointwiseEstimate}
Let \(k \in \N_*\).
If \(u \in W^{k, p}(Q^m; \R^\nu) \cap W^{1, kp}(Q^m; \R^\nu)\), then
for every smooth map \(\eta : \R^\nu \to \R^\nu\) and
for every \(j \in \{1, \dots, k\},\) there exists a measurable function \(G_j : \Omega \to \R\) such that
\[
\abs{D^j(\eta \circ u)} \le [\eta]_{C^j(\R^\nu)} G_j
\]
and
\[
\norm{G_j}_{L^p(Q^m)} \le C,
\]
for some constant \(C > 0\) depending on \(k\), \(p\), \(m\), \(\norm{u}_{W^{k, p}(Q^m)}\) and \(\norm{u}_{W^{1, kp}(Q^m)}\).
\end{proposition}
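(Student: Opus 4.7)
The plan is to unwind $D^{j}(\eta\circ u)$ with the classical Faà di Bruno formula and then control each resulting monomial in $L^{p}(Q^{m})$ by Hölder's inequality combined with Gagliardo–Nirenberg interpolation, whose endpoints are precisely tuned to the two hypotheses $u\in W^{k,p}$ and $u\in W^{1,kp}$.

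First, I would apply Faà di Bruno in the form
\[
D^{j}(\eta\circ u)
= \sum_{r=1}^{j}\ \sum_{\substack{\alpha_{1}+\cdots+\alpha_{r}=j\\ \alpha_{i}\ge 1}} c_{\alpha}\,(D^{r}\eta)(u)\bigl[D^{\alpha_{1}}u,\dots,D^{\alpha_{r}}u\bigr],
\]
which immediately yields the pointwise majorization
\[
\abs{D^{j}(\eta\circ u)(x)}
\le [\eta]_{C^{j}(\R^{\nu})}\,G_{j}(x),
\qquad
G_{j}(x):=\sum_{r=1}^{j}\sum_{\alpha_{1}+\cdots+\alpha_{r}=j}c_{\alpha}\prod_{i=1}^{r}\abs{D^{\alpha_{i}}u(x)}.
\]
Thus the whole issue is to show $\norm{G_{j}}_{L^{p}(Q^{m})}\le C$, and it suffices to treat one partition $(\alpha_{1},\dots,\alpha_{r})$ of $j$ at a time.

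The natural choice of Hölder exponents is $p_{i}=kp/\alpha_{i}$, since then
\[
\sum_{i=1}^{r}\frac{1}{p_{i}}=\frac{1}{kp}\sum_{i=1}^{r}\alpha_{i}=\frac{j}{kp}\le \frac{1}{p},
\]
so that
\[
\Bignorm{\prod_{i=1}^{r}\abs{D^{\alpha_{i}}u}}_{L^{kp/j}(Q^{m})}
\le \prod_{i=1}^{r}\norm{D^{\alpha_{i}}u}_{L^{kp/\alpha_{i}}(Q^{m})}.
\]
Each factor on the right is then estimated by the Gagliardo–Nirenberg inequality on the cube $Q^{m}$: for $1\le \alpha_{i}\le k$ one has
\[
\norm{D^{\alpha_{i}}u}_{L^{kp/\alpha_{i}}(Q^{m})}
\le C\Bigl(\norm{D^{k}u}_{L^{p}(Q^{m})}^{\theta_{i}}\norm{Du}_{L^{kp}(Q^{m})}^{1-\theta_{i}}+\norm{u}_{L^{p}(Q^{m})}\Bigr),
\qquad \theta_{i}=\frac{\alpha_{i}-1}{k-1},
\]
because the scaling relation $\frac{\alpha_{i}}{kp}=\frac{\theta_{i}}{p}+\frac{1-\theta_{i}}{kp}$ and the differentiation relation $\alpha_{i}=\theta_{i}k+(1-\theta_{i})$ are simultaneously satisfied. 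The endpoints $\alpha_{i}=1$ and $\alpha_{i}=k$ reduce to the hypothesis $u\in W^{1,kp}\cap W^{k,p}$. Finally, since $Q^{m}$ has finite measure and $kp/j\ge p$, the continuous embedding $L^{kp/j}(Q^{m})\hookrightarrow L^{p}(Q^{m})$ gives
\[
\norm{G_{j}}_{L^{p}(Q^{m})}\le C\bigl(\norm{u}_{W^{k,p}(Q^{m})},\norm{u}_{W^{1,kp}(Q^{m})},k,p,m\bigr),
\]
concluding the proof.

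The only genuinely delicate point is the book-keeping for Gagliardo–Nirenberg: one must check that for every admissible partition of $j$ the scaling and differentiation constraints are mutually compatible, and that the domain $Q^{m}$ is regular enough for the interpolation inequality with the lower-order correction to apply (which it is, being Lipschitz). Degenerate cases $k=1$ (where the statement reduces to the standard chain rule) or $r=1,\ \alpha_{1}=j$ (which is a pure Gagliardo–Nirenberg estimate without Hölder) fit the same framework and need no separate treatment.
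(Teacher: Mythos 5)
Your proposal is correct and follows essentially the same route as the paper: Faà di Bruno to reduce to monomials $\prod_i\abs{D^{\alpha_i}u}$, Gagliardo--Nirenberg interpolation between the $W^{k,p}$ and $W^{1,kp}$ norms to place each factor in the right Lebesgue space, and Hölder to conclude. The only (immaterial) difference is the choice of exponents — you use $L^{kp/\alpha_i}$ and then embed $L^{kp/j}\hookrightarrow L^{p}$ on the bounded cube, whereas the paper works directly with $L^{jp/\alpha_i}$.
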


We use the following notation:
\[
[\eta]_{C^j(\R^\nu)} = \sum_{i=1}^j \norm{D^i\eta}_{L^\infty(\R^\nu)}.
\]

\begin{proof}
We first observe that \(\eta \circ u \in W^{k, p}(Q^m; \R^\nu)\).
By the chain rule, 
\[
\begin{split}
|D^{j}(\eta \circ u)(x)|
& \leq \NewConstant \sum_{i=1}^j \abs{D^i \eta(u(x))} \sum_{\substack{1\leq t_1\leq \dots \leq t_i\leq j,\\ t_1+\dots+t_i = j}}|D^{t_1}u(x)| \dotsm |D^{t_i}u(x)|\\
& \leq \SameConstant [\eta]_{C^j(\R^\nu)} \sum_{i=1}^j \sum_{\substack{1\leq t_1\leq \dots \leq t_i\leq j,\\ t_1+\dots+t_i = j}}|D^{t_1}u(x)|\dotsm |D^{t_i}u(x)|.
\end{split}
\] 
Let
\[
G_j
= \SameConstant \sum_{i=1}^j \sum_{\substack{1\leq t_1\leq \dots \leq t_i\leq j,\\ t_1+\dots+t_i = j}}|D^{t_1}u|\dotsm |D^{t_i}u|
\]
Since the map \(u\) in the statement belongs to \(W^{k,p}(Q^m ; \R^\nu) \cap W^{1,kp}(Q^m ; \R^\nu)\), it follows from the Gagliardo-Nirenberg interpolation inequality \cites{Gagliardo-1959, Nirenberg-1959} that
\[
D^i u \in L^{\frac{jp}{i}}(Q^m).
\]
By H\"older's inequality, we deduce that \(G_j \in L^p(Q^m)\).
\end{proof}

We now establish a counterpart of the previous proposition for the fractional derivative introduced by Maz\cprime ya and Shaposhnikova~\cite{Mazya-Shaposhnikova}.
More precisely, given \(0 < \sigma < 1\), \(1 \le p < +\infty\), a domain \(\Omega \subset \R^m\) and a measurable function \(u : \Omega \to \R^\nu\), define for \(x \in \Omega\),
\[
\displaystyle D^{\sigma, p}u(x) = \bigg(\int\limits_{\Omega} \frac{\abs{u(x) - u(y)}^p}{\abs{x - y}^{m + \sigma p}} \dif y\bigg)^{1/p}.
\]
We extend this definition for any \(s>0\) such that \(s\notin \N\) as follows: 
\[
D^{s, p}u =  D^{\sigma, p} (D^k u),
\]
where \(k = \floor{s}\) is the integral part of \(s\) and \(\sigma = s - \floor{s}\) is the fractional part of \(s\). 
Using this notation, we have
\[
[D^k u]_{W^{\sigma, p}(\Omega)} = \norm{D^{s, p} u}_{L^p(\Omega)}.
\]

\begin{proposition}
\label{propositionPointwiseEstimateWsp}
Let \(s > 1\) be such that \(s \not\in \N\).
If \(u \in W^{s, p}(Q^m; \R^\nu) \cap W^{1, sp}(Q^m; \R^\nu)\), then for every smooth map \(\eta : \R^\nu \to \R^\nu\) there exists a measurable function \(H : Q^m \to \R\) such that
\[
\abs{D^{s, p}(\eta \circ u)} \le  [\eta]_{C^{k+1}(\R^\nu)}^{\sigma} [\eta]_{C^k(\R^\nu)}^{1 - \sigma} H
\]
and
\[
\norm{H}_{L^p(Q^m)} \le C,
\]
for some constant \(C > 0\) depending on \(s\), \(p\), \(m\), \(\norm{u}_{W^{s, p}(Q^m)}\) and \(\norm{u}_{W^{1, sp}(Q^m)}\).
\end{proposition}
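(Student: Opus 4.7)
The approach is to apply the Fa\`a di Bruno formula to $D^k(\eta \circ u)$, expressing it as a sum of terms $\Phi_{i, \vec{t}}(x) = (D^i \eta)(u(x)) \prod_{j=1}^{i} D^{t_j} u(x)$ with $1 \le i \le k$ and $t_1 + \dots + t_i = k$, and then to estimate $D^{\sigma,p}\Phi_{i,\vec t}(x)$ for each. Writing $e = D^i\eta \circ u$ and $T = \prod_j D^{t_j}u$, I would telescope
\[
\Phi(x) - \Phi(y) = \bigl(e(x)-e(y)\bigr)T(x) + e(y)\bigl(T(x)-T(y)\bigr),
\]
and further expand $T(x)-T(y) = \sum_{\ell=1}^{i} P_\ell(x,y)$ so that each $P_\ell$ contains exactly one factor $D^{t_\ell}u(x) - D^{t_\ell}u(y)$. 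By Minkowski's inequality, $D^{\sigma,p}\Phi(x)$ is controlled by the $L^p(|x-y|^{-m-\sigma p}\,\mathrm{d}y)$ norm of each summand separately.

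The delicate step is the composition-difference piece $(e(x)-e(y))T(x)$. The naive pointwise interpolation $|e(x)-e(y)| \le C[\eta]_{C^{k+1}}^\sigma [\eta]_{C^k}^{1-\sigma}|u(x)-u(y)|^\sigma$ cannot be inserted into the Gagliardo integral, because $\int |u(x)-u(y)|^{\sigma p}/|x-y|^{m+\sigma p}\,\mathrm{d}y$ diverges for every smooth $u$; the interpolation must instead emerge from a splitting of the $y$-integral. Fixing $\rho = \rho(x) > 0$, on $\{|x-y| \le \rho\}$ I use $|e(x)-e(y)| \le [\eta]_{C^{k+1}}|u(x)-u(y)|$ together with Haj\l asz's pointwise estimate $|u(x)-u(y)| \le C|x-y|(g(x)+g(y))$ with $g = \mathcal{M}|Du| \in L^{sp}$, and a dyadic decomposition in the annuli $\{2^{-j-1}\rho < |x-y| \le 2^{-j}\rho\}$ yields
\[
\int_{|x-y|\le \rho}\frac{|u(x)-u(y)|^{p}}{|x-y|^{m+\sigma p}}\,\mathrm{d}y \le C \rho^{(1-\sigma)p}h(x),
\]
where $h = g^p + \mathcal{M}(g^p)$ belongs to $L^s$ by the Hardy--Littlewood maximal theorem (using $sp>1$). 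On $\{|x-y|>\rho\}$ the bound $|e(x)-e(y)| \le 2[\eta]_{C^k}$ contributes $C [\eta]_{C^k}^p T(x)^p \rho^{-\sigma p}$. Optimizing $\rho$ to balance the two contributions produces the geometric mean $C[\eta]_{C^{k+1}}^{\sigma p}[\eta]_{C^k}^{(1-\sigma)p}T(x)^p h(x)^\sigma$, so this piece contributes $C[\eta]_{C^{k+1}}^\sigma[\eta]_{C^k}^{1-\sigma}T(x)h(x)^{\sigma/p}$ to $D^{s,p}(\eta\circ u)(x)$.

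For the product-difference pieces $e(y) P_\ell(x,y)$, the estimate $|e(y)| \le [\eta]_{C^k} \le [\eta]_{C^{k+1}}^\sigma[\eta]_{C^k}^{1-\sigma}$ already supplies the correct $\eta$-constants; the factor $D^{t_\ell}u(x)-D^{t_\ell}u(y)$ is treated directly via $D^{s,p}u(x)$ when $t_\ell = k$, and when $t_\ell < k$ via Haj\l asz applied to $D^{t_\ell}u$ (whose gradient is $D^{t_\ell+1}u$) combined with the same dyadic-maximal-function argument. Summing all contributions defines $H(x)$ as a sum of products of $|D^{t_j}u|$-factors together with $h(x)^{\sigma/p}$ or an analogous maximal function; the $L^p$ bound on $H$ then follows from H\"older's inequality combined with the Gagliardo--Nirenberg interpolation applied to $u \in W^{s,p} \cap W^{1,sp}$ (which places $D^{t_j}u$ in $L^{sp/t_j}$) and the Hardy--Littlewood maximal theorem, since by design the reciprocal H\"older exponents sum to $k/(sp) + \sigma/(sp) = 1/p$. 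The main obstacle is the composition-difference step: the pointwise interpolation fails to produce a locally integrable integrand, and the threshold-plus-optimization trick requires $sp > 1$ so that the iterated Hardy--Littlewood maximal function lies in $L^s$.
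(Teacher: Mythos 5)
Your proposal is correct and follows essentially the same route as the paper, which likewise runs the Maz\cprime ya--Shaposhnikova strategy: Fa\`a di Bruno, the same split into a composition-difference term $\bigl(D^i\eta(u(x))-D^i\eta(u(y))\bigr)T(x)$ and product-difference terms, the same threshold-in-$\rho$ optimization yielding the geometric mean $[\eta]_{C^{k+1}}^{\sigma}[\eta]_{C^k}^{1-\sigma}$ (packaged in the paper as Lemma~\ref{Bou} via Hardy's inequality and Hedberg's lemma rather than via Haj\l asz's pointwise inequality), and the same maximal-function-plus-H\"older bookkeeping for the tensor-product differences (the paper's Lemma~\ref{MaSh}, proved there by reduction to dimension one). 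The differences are only in packaging, not in substance.
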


This proposition implies a theorem of Brezis and Mironescu~\cite{Brezis-Mironescu}*{Theorem~1.1} concerning the boundedness of the composition operator from \(W^{s, p} \cap W^{1, sp}\) into \(W^{s, p}\). 
A more elementary proof of the same result has been provided by Maz\cprime ya and Shaposhnikova~\cite{Mazya-Shaposhnikova};
our proof of Proposition~\ref{propositionPointwiseEstimateWsp} is based on their strategy.

We begin with the following pointwise estimate of Maz\cprime ya and Shaposhnikova~\cite{Mazya-Shaposhnikova}*{Lemma}:

\begin{lemma}\label{Bou}
Let \(q \ge 1\). 
If \(v \in W^{1, q}_{\mathrm{loc}} (\R^m; \R^\nu)\), then for \(x \in \R^m\),
\[
\bigl(D^{\sigma, q} v(x)\bigr)^q
\le
C \bigl(\mathcal{M}\abs{Dv}^q(x)\bigr)^{\sigma} \bigl(\mathcal{M}|v|^q(x)\bigr)^{1-\sigma},
\]
for some constant \(C > 0\) depending on \(m\) and \(q\).
\end{lemma}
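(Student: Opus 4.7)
The plan is to split the integral defining $D^{\sigma,q}v(x)^q$ at a radius $r>0$ which will be optimized at the end: control the near part $\int_{|x-y|<r}$ by an integrated gradient bound leading to a multiple of $\mathcal{M}|Dv|^q(x)$, control the far part $\int_{|x-y|\ge r}$ by a crude pointwise inequality producing a multiple of $\mathcal{M}|v|^q(x)$, and then balance the two in $r$.

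For the near part, I would start from the elementary inequality $|v(x)-v(y)|\le |x-y|\int_0^1|Dv(x+t(y-x))|\,dt$ and apply Jensen's inequality to obtain $|v(x)-v(y)|^q\le |x-y|^q\int_0^1|Dv(x+t(y-x))|^q\,dt$. Plugging this into the near integral and performing the change of variables $z=x+t(y-x)$ together with Fubini transforms the near part into $\int_0^1 t^{\sigma q - q}\int_{B(x,tr)}|z-x|^{q(1-\sigma)-m}|Dv(z)|^q\,dz\,dt$. A dyadic annular decomposition of $B(x,tr)$ combined with the definition of the Hardy--Littlewood maximal function bounds the inner integral by $C(tr)^{q(1-\sigma)}\mathcal{M}|Dv|^q(x)$; the powers of $t$ then cancel exactly so that the $t$ integral contributes only a universal constant, yielding the near-part bound $C\,r^{q(1-\sigma)}\mathcal{M}|Dv|^q(x)$.

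For the far part, I would use the trivial bound $|v(x)-v(y)|^q\le 2^{q-1}(|v(x)|^q+|v(y)|^q)$. The term involving $|v(x)|^q$ integrates explicitly to $C\,r^{-\sigma q}|v(x)|^q$, while the term involving $|v(y)|^q$ is handled by a dyadic decomposition of $\{|x-y|\ge r\}$ into shells $\{2^k r\le |x-y|<2^{k+1}r\}$: estimating $|x-y|^{-m-\sigma q}$ by its value on the inner sphere and using the definition of $\mathcal{M}$ on $B(x,2^{k+1}r)$ gives a geometric series in $2^{-k\sigma q}$ which converges because $\sigma q > 0$ and sums to $C\,r^{-\sigma q}\mathcal{M}|v|^q(x)$. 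Bounding $|v(x)|^q$ by $\mathcal{M}|v|^q(x)$ at Lebesgue points, the far part is controlled by $C\,r^{-\sigma q}\mathcal{M}|v|^q(x)$.

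Combining the two estimates gives $(D^{\sigma,q}v(x))^q \le C\bigl(r^{q(1-\sigma)}\mathcal{M}|Dv|^q(x)+r^{-\sigma q}\mathcal{M}|v|^q(x)\bigr)$, and the optimal choice $r^q=\mathcal{M}|v|^q(x)/\mathcal{M}|Dv|^q(x)$ balances both terms and produces exactly the interpolated bound $C\bigl(\mathcal{M}|Dv|^q(x)\bigr)^{\sigma}\bigl(\mathcal{M}|v|^q(x)\bigr)^{1-\sigma}$ (the degenerate cases where one of the maximal functions vanishes or is infinite are handled by passing to the limit $r\to 0$ or $r\to \infty$). The main technical step that requires care is the annular decomposition of the inner integral in the near part: one must cover both regimes $q(1-\sigma)<m$ and $q(1-\sigma)\ge m$, in which $|z-x|^{q(1-\sigma)-m}$ is respectively integrable near $x$ via dyadic summation or is pointwise dominated by $(tr)^{q(1-\sigma)-m}$; in either case the same bound $C(tr)^{q(1-\sigma)}\mathcal{M}|Dv|^q(x)$ emerges.
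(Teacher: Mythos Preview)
Your proof is correct and follows essentially the same scheme as the paper: split the defining integral at a radius, bound the near part by $C\,r^{(1-\sigma)q}\,\mathcal{M}\abs{Dv}^q(x)$ and the far part by $C\,r^{-\sigma q}\,\mathcal{M}\abs{v}^q(x)$, and optimize in $r$. The only difference is cosmetic: for the near part the paper invokes a Hardy-type inequality (Maz\cprime ya) and then Hedberg's lemma as black boxes, whereas you unpack both by hand via the fundamental theorem of calculus with Jensen and the dyadic annular decomposition; the far-part argument and the optimization are identical.
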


The maximal function associated to a nonnegative function $f\in L^1_\mathrm{loc}(\R^m)$ is defined for \(x \in \R^m\) by
\[
\mathcal{M}f(x)=\sup_{\rho >0} \frac{1}{|B_\rho^m|} \int\limits_{B_\rho^m (x)} f.
\]
For completeness, we prove Lemma~\ref{Bou} using a property of the maximal function due to Hedberg~\cite{Hedberg}*{Lemma}:

\begin{lemma}
\label{lemmaHedberg}
Let \(f \in L^1_{\mathrm{loc}} (\R^m)\) be a nonnegative function and let  \(\delta >0\).
For every \(x \in \R^m\) and \(\rho> 0\), 
\begin{align*}
\int\limits_{B_\rho^m (x)} \frac{f (y)}{\abs{y - x}^{m - \delta}} \dif y
 & \le C \rho^{\delta} \mathcal{M} f (x),\\
 \int\limits_{\R^m \setminus B_\rho^m (x)} \frac{f (y)}{\abs{y - x}^{m + \delta}} \dif y
 & \le \frac{C}{\rho^{\delta}} \mathcal{M} f (x),
\end{align*}
for some constant \(C > 0\) depending on \(m\) and \(\delta\).
\end{lemma}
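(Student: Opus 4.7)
The plan is to prove both inequalities by a standard dyadic annular decomposition around $x$, controlling the integral on each annulus by the sup norm of the kernel there times the average of $f$, then invoking the defining bound $\frac{1}{\abs{B_r^m}}\int_{B_r^m(x)} f \le \mathcal{M} f(x)$.

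For the first inequality I would write $B_\rho^m(x) = \bigcup_{j \ge 0} A_j$ with $A_j = B_{\rho/2^j}^m(x) \setminus B_{\rho/2^{j+1}}^m(x)$. On $A_j$ one has $\abs{y-x}$ comparable to $\rho/2^j$, so $\abs{y-x}^{-(m-\delta)} \le C\, (\rho/2^j)^{-(m-\delta)}$ with a constant depending only on $m$ and $\delta$, whence
\[
\int_{A_j} \frac{f(y)}{\abs{y-x}^{m-\delta}} \dif y
\le C \Big(\frac{\rho}{2^j}\Big)^{-(m-\delta)} \int_{B_{\rho/2^j}^m(x)} f
\le C' \Big(\frac{\rho}{2^j}\Big)^{\delta} \mathcal{M} f(x).
\]
Summing over $j \ge 0$ produces a geometric series in $2^{-j\delta}$, convergent precisely because $\delta > 0$, and yields the desired bound $C \rho^\delta \mathcal{M} f(x)$.

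The second inequality is obtained by a symmetric exterior decomposition $\R^m \setminus B_\rho^m(x) = \bigcup_{j \ge 0} A_j'$ with $A_j' = B_{2^{j+1}\rho}^m(x) \setminus B_{2^j\rho}^m(x)$. On $A_j'$ the kernel satisfies $\abs{y-x}^{-(m+\delta)} \le (2^j\rho)^{-(m+\delta)}$, and the same averaging bound on $B_{2^{j+1}\rho}^m(x)$ gives
\[
\int_{A_j'} \frac{f(y)}{\abs{y-x}^{m+\delta}} \dif y
\le C\, (2^j\rho)^{-\delta} \mathcal{M} f(x),
\]
and summation over $j$ produces a geometric series in $2^{-j\delta}$ that converges by $\delta > 0$, giving $C\rho^{-\delta} \mathcal{M} f(x)$.

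The argument is essentially routine and I do not anticipate any substantive obstacle; the only place where the hypothesis plays a role is the positivity of $\delta$, which is exactly what ensures convergence of both dyadic geometric series --- one at the origin for the first inequality and one at infinity for the second.
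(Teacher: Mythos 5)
Your argument is correct and coincides with the paper's own proof: the paper establishes the first estimate by exactly this dyadic annular decomposition (summing \(\int_{B^m_{\rho 2^{-i}}(x)\setminus B^m_{\rho 2^{-i-1}}(x)}\) and bounding each term by \(C\rho^\delta 2^{-\delta i}\mathcal{M}f(x)\)) and notes that the second is proved similarly, which is precisely your exterior decomposition. No gaps.
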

\begin{proof}
We briefly sketch the proof of Hedberg for the first estimate. The proof of the second one is similar. One has
\[
\begin{split}
\int\limits_{B_\rho^m (x)} \frac{f (y)}{\abs{y - x}^{m - \delta}} \dif y
& = \sum_{i = 0}^\infty \int\limits _{B^m_{ \rho 2^{-i}} (x) \setminus B^m_{\rho 2^{-i-1} } (x)} \frac{f (y)}{\abs{x - y}^{m - \delta}} \dif y \\
& \le \NewConstant \rho^\delta \sum_{i = 0}^\infty  2^{-\delta i} \mathcal{M} f (x)
\le \Constant \rho^\delta \mathcal{M} f (x).\qedhere
\end{split}
\]
\end{proof}

\begin{proof}[Proof of Lemma~\ref{Bou}]
Let \(\rho > 0\).
By Hardy's inequality \cite{Mazya}*{Section~1.3},
\[
\int\limits_{B_\rho^m(x)} \frac{\abs{v(x) - v(y)}^q}{\abs{x - y}^{m + \sigma q}} \dif y
\le \NewConstant \int\limits_{B_\rho^m(x)} \frac{\abs{Dv(y)}^q}{\abs{x - y}^{m - (1 - \sigma) q}} \dif y.
\]
Thus, by Hedberg's lemma,
\[
 \int\limits_{B_\rho^m(x)} \frac{\abs{v(x) - v(y)}^q}{\abs{x - y}^{m + \sigma q}} \dif y
 \le 
\Constant \rho^{(1 - \sigma)q} \cM \abs{Dv}^q (x).
\]
Since 
\[
\abs{v(x) - v(y)}^q 
\le \Constant \big( \abs{v(x)}^q + \abs{v(y)}^q \big),
\]
by an explicit integral computation and by Hedberg's lemma,
\[
\int\limits_{\R^m \setminus B_\rho^m(x)} \frac{\abs{v(x) - v(y)}^q}{\abs{x - y}^{m + \sigma q}} \dif y
\le \frac{\Constant}{\rho^{\sigma q}} \big( \abs{v(x)}^q + \cM\abs{v}^q(x) \big)
\le
\frac{\Constant}{\rho^{\sigma q}} \cM\abs{v}^q(x).
\]
We conclude that
\[
\bigl(D^{\sigma, q} v(x)\bigr)^q 
\le C_2 \rho^{(1 - \sigma)q} \cM \abs{Dv}^q(x) + \frac{\SameConstant}{\rho^{\sigma q}} \cM\abs{v}^q(x).
\]
Minimizing the right-hand side with respect to \(\rho\), we deduce the pointwise estimate.
\end{proof}

The following lemma is implicitly proved in 
\cite{Mazya-Shaposhnikova}*{Section~2}:

\begin{lemma}\label{MaSh}
Let \(0 < \sigma < 1\), \(1 \le p < +\infty\) and \(i \in \N_*\). 
If for every \(\alpha \in \{1, \dots, i\}\), 
\(v_\alpha \in L^{q_\alpha}(\R^m)\) and \(Dv_\alpha \in L^{r_\alpha}(\R^m)\), where \(1 < r_\alpha < q_\alpha\) and
\[
  \frac{1 - \sigma}{q_\alpha} + \frac{\sigma}{r_\alpha} + \sum_{\substack{\beta = 1\\\beta \ne \alpha}}^i \frac{1}{q_\beta} = \frac{1}{p},
\]
then \(\prod\limits_{\alpha =1}^i v_\alpha \in W^{\sigma, p}(\R^m)\) and
\[
\textstyle 
\bigl[\prod\limits_{\alpha=1}^i v_\alpha \bigr]_{W^{\sigma, p}(\R^m)}
\displaystyle 
\leq C \sum_{\alpha = 1}^i \bigg(\|v_\alpha \|_{L^{q_\alpha}(\R^m)}^{1-\sigma} \|D v_\alpha\|_{L^{r_\alpha}(\R^m)}^{\sigma} 
\prod_{\substack{\beta = 1\\\beta \ne \alpha}}^i \|v_\beta \|_{L^{q_\beta}(\R^m)} \bigg),
\]
for some constant \(C > 0\) depending on \(m\), \(\sigma\),  \(r_1\), \dots, \(r_i\), \(q_1\), \dots, \(q_i\).
\end{lemma}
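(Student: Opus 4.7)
The plan is to reduce the \(W^{\sigma, p}\)-seminorm of the product to a sum over \(\alpha\) of weighted maximal-function estimates via telescoping, following the spirit of Maz\cprime ya and Shaposhnikova, and then to apply a generalized H\"older inequality combined with the \(L^t\)-boundedness of the Hardy-Littlewood maximal function. Concretely, from the algebraic identity
\[
\prod_{\alpha = 1}^i v_\alpha(x) - \prod_{\alpha = 1}^i v_\alpha(y) = \sum_{\alpha = 1}^i \Bigl(\prod_{\beta < \alpha} v_\beta(x)\Bigr)\bigl(v_\alpha(x) - v_\alpha(y)\bigr)\Bigl(\prod_{\beta > \alpha} v_\beta(y)\Bigr)
\]
and the inequality \(\lvert\sum_\alpha a_\alpha\rvert^p \le C\sum_\alpha \lvert a_\alpha\rvert^p\), one gets
\[
\textstyle\bigl[\prod_{\alpha} v_\alpha\bigr]_{W^{\sigma, p}(\R^m)}^p \le C \sum_{\alpha=1}^i \int_{\R^m} h_\alpha(x)\, I_\alpha(x)\dif x,
\]
where \(h_\alpha(x) = \prod_{\beta < \alpha}\lvert v_\beta(x)\rvert^p\), \(g_\alpha(y) = \prod_{\beta > \alpha}\lvert v_\beta(y)\rvert^p\), and \(I_\alpha(x) = \int_{\R^m} \frac{\lvert v_\alpha(x) - v_\alpha(y)\rvert^p g_\alpha(y)}{\lvert x-y\rvert^{m + \sigma p}}\dif y\).

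For the weighted pointwise estimate of \(I_\alpha\), I split the \(y\)-integral at a radius \(\rho > 0\). On the far part \(\lvert x-y\rvert > \rho\), I bound \(\lvert v_\alpha(x) - v_\alpha(y)\rvert^p \le C(\lvert v_\alpha(x)\rvert^p + \lvert v_\alpha(y)\rvert^p)\) and decouple \(g_\alpha\) from \(\lvert v_\alpha\rvert^p\) by an auxiliary H\"older inequality in \(y\) with conjugate exponents \((s, s')\), then invoke the second estimate in Lemma~\ref{lemmaHedberg}. On the near part \(\lvert x-y\rvert < \rho\), I combine Jensen's inequality via the fundamental theorem of calculus, a change of variables \(z = x + t(y - x)\), the same H\"older decoupling, and the first estimate in Lemma~\ref{lemmaHedberg}. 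Optimizing \(\rho\) pointwise produces
\[
I_\alpha(x) \le C\,\cM(g_\alpha^{s'})(x)^{1/s'}\cM(\lvert Dv_\alpha\rvert^{ps})(x)^{\sigma/s}\cM(\lvert v_\alpha\rvert^{ps})(x)^{(1 - \sigma)/s}.
\]

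Since \(g_\alpha^{s'} = \prod_{\beta > \alpha}\lvert v_\beta\rvert^{ps'}\), the sub-multiplicative inequality \(\cM(\prod_k f_k) \le \prod_k \cM(f_k^{\kappa_k})^{1/\kappa_k}\), valid whenever \(\sum 1/\kappa_k = 1\) with \(\kappa_k \ge 1\), factors \(\cM(g_\alpha^{s'})\) into a product of maximal functions applied to individual \(\lvert v_\beta\rvert\) for \(\beta > \alpha\). Multiplying by \(h_\alpha(x)\) and integrating over \(x\), I then apply the generalized H\"older inequality with exponents \(\frac{r_\alpha}{p\sigma}\), \(\frac{q_\alpha}{p(1-\sigma)}\), and \(\{q_\beta/p\}_{\beta \neq \alpha}\) --- whose reciprocals sum to \(1\) precisely by the hypothesis \(\frac{1-\sigma}{q_\alpha} + \frac{\sigma}{r_\alpha} + \sum_{\beta \neq \alpha} \frac{1}{q_\beta} = \frac{1}{p}\) --- together with the \(L^t\)-boundedness of the maximal function for \(t > 1\), to deduce
\[
\int h_\alpha\, I_\alpha\dif x \le C\, \|Dv_\alpha\|_{L^{r_\alpha}}^{p\sigma}\|v_\alpha\|_{L^{q_\alpha}}^{p(1-\sigma)}\prod_{\beta \ne \alpha}\|v_\beta\|_{L^{q_\beta}}^p.
\]
Summing over \(\alpha\) and taking \(p\)-th roots, via the subadditivity \((\sum_\alpha a_\alpha)^{1/p} \le \sum_\alpha a_\alpha^{1/p}\) for \(p \ge 1\), completes the argument.

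The main obstacle is the simultaneous calibration of the auxiliary exponents \(s, s'\) and \(\{\kappa_k\}\): they must respect the Hedberg threshold \(s < m/(m - (1 - \sigma)p)\), keep each maximal function acting on an \(L^t\) space with \(t > 1\), and preserve the H\"older sum normalization. The hypothesis \(1 < r_\alpha < q_\alpha\) combined with the exponent identity is what forces \(q_\beta > p\) for every \(\beta\), the key structural fact ensuring these constraints remain compatible.
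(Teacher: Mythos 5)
Your overall architecture (telescoping, near/far splitting at a radius \(\rho\), Hedberg's lemma, optimization in \(\rho\), then H\"older plus the Maximal Theorem) is the right one, but the step where you estimate the near part \emph{directly in \(\R^m\)} has a genuine gap: it puts the \(p\)-th power of \(\abs{Dv_\alpha}\) \emph{inside} the maximal function, and the resulting integrability requirement is incompatible with the hypotheses. Concretely, Jensen plus the change of variables \(z = x + t(y-x)\), followed by your H\"older decoupling with exponents \((s, s')\), produces the factor \(\cM(\abs{Dv_\alpha}^{ps})(x)^{\sigma/s}\); in the final H\"older step this factor must be placed in \(L^{r_\alpha/(p\sigma)}\), i.e.\ you need \(\norm{\cM(\abs{Dv_\alpha}^{ps})}_{L^{r_\alpha/(ps)}}\), and the Maximal Theorem then requires \(r_\alpha/(ps) > 1\), that is \(r_\alpha > ps \ge p\). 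The hypothesis only gives \(r_\alpha > 1\), and \(r_\alpha \le p\) genuinely occurs: for \(i = 1\) the identity \(\frac{1}{p} = \frac{1-\sigma}{q} + \frac{\sigma}{r}\) with \(r < q\) forces \(r < p < q\), so already the simplest instance of your scheme (where \(g_\alpha \equiv 1\), \(s = 1\), and the needed bound is \(\norm{\cM(\abs{Dv}^p)}_{L^{r/p}} \lesssim \norm{\abs{Dv}^p}_{L^{r/p}}\)) fails. For general \(i\) the calibration is also obstructed: with \(\frac{1}{\overline{q}_\alpha} = \sum_{\beta > \alpha} \frac{1}{q_\beta}\), the two constraints \(s < r_\alpha/p\) and \(s' < \overline{q}_\alpha/p\) admit a common \(s\) only if \(\frac{1}{r_\alpha} + \frac{1}{\overline{q}_\alpha} < \frac{1}{p}\), which for \(\alpha = 1\) would force \((1-\sigma)\bigl(\frac{1}{r_1} - \frac{1}{q_1}\bigr) < 0\), impossible. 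So the fact you invoke at the end --- \(q_\beta > p\) for every \(\beta\) --- is true but is not the binding constraint, and the constraints are not compatible.

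The paper circumvents exactly this difficulty by first reducing to dimension one via the directional characterization of the \(W^{\sigma, p}\) seminorm, and then using the one-dimensional bound \(\abs{v_\alpha(x) - v_\alpha(y)} \le 2 \abs{x - y}\, \cM\abs{v_\alpha'}(x)\), in which the maximal function appears to the \emph{first} power and is evaluated at \(x\) only. This removes all \(y\)-dependence from the difference quotient, so the weight \((\overline{f}_\alpha(y))^p\) is handled by Hedberg's lemma alone --- no H\"older decoupling in \(y\), no auxiliary exponent \(s\) --- and the Maximal Theorem is ultimately applied to \(\cM\abs{v_\alpha'}\) in \(L^{r_\alpha}\) (needing only \(r_\alpha > 1\)) and to \(\cM(\overline{f}_\alpha)^p\) in \(L^{\overline{q}_\alpha/p}\) (where \(\overline{q}_\alpha > p\) is guaranteed by the exponent identity). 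No such one-sided pointwise bound holds in \(\R^m\) for \(m \ge 2\), which is why the slicing step is essential; if you want to avoid it you need a genuinely different mechanism, such as the Triebel--Lizorkin product estimates mentioned after the lemma, which however only cover \(p > 1\).
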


\begin{proof}
We first consider the case of dimension \(m=1\).
Note that
\begin{equation}
\label{eqProductTriangleInequality}
\textstyle \bigabs{\prod\limits_{\alpha=1}^i v_\alpha(x) - \prod\limits_{\alpha=1}^i v_\alpha(y)}
\displaystyle 
\le \sum_{\alpha = 1}^i \bigabs{v_1(x) \dotsm v_{\alpha - 1}(x) \bigl(v_\alpha(x) - v_\alpha(y)\bigr) v_{\alpha + 1}(y) \dotsm v_i(y)}.
\end{equation}
Thus, the left-hand side is bounded from above by a sum of functions of the form
\[
\underline{f}_\alpha (x)\, \abs{v_\alpha(x) - v_\alpha(y)}\, \overline{f}_\alpha(y).
\]

By the Fundamental theorem of Calculus, for every \(x, y \in \R\),
\[
\abs{v_\alpha(x) - v_\alpha(y)} 
\le 2 \abs{x - y} \cM{\abs{v_\alpha'}}(x).
\]
Thus, for every \(\rho > 0\),
\[
\int\limits_{B_\rho^1(x)} \frac{\abs{v_\alpha(x) - v_\alpha(y)}^p}{\abs{x - y}^{1 + \sigma p}} (\overline{f}_\alpha(y))^p \dif y 
\le
\NewConstant \big(\cM\abs{v_\alpha'}(x)\big)^p \int\limits_{B_\rho^1(x)} \frac{(\overline{f}_\alpha(y))^p}{\abs{x - y}^{1 - (1 -\sigma) p}}  \dif y. 
\]
By Hedberg's lemma, we get
\[
\int\limits_{B_\rho^1(x)} \frac{\abs{v_\alpha(x) - v_\alpha(y)}^p}{\abs{x - y}^{1 + \sigma p}} (\overline{f}_\alpha(y))^p \dif y 
\le 
\Constant\rho^{(1 -\sigma) p} \big(\cM\abs{v_\alpha'}(x)\big)^p \cM  (\overline{f}_\alpha)^p(x).
\]
Next, we write
\[
\abs{v_\alpha(x) - v_\alpha(y)}^p (\overline{f}_\alpha(y))^p
\le \Constant \big( \abs{v_\alpha(x)}^p (\overline{f}_\alpha(y))^p + \abs{v_\alpha(y)}^p (\overline{f}_\alpha(y))^p \big).
\]
By Hedberg's lemma, we also have
\[
\int\limits_{\R \setminus B_\rho^1(x)} \frac{\abs{v_\alpha(x) - v_\alpha(y)}^p}{\abs{x - y}^{1 + \sigma p}} (\overline{f}_\alpha(y))^p \dif y 
\le 
\frac{\Constant}{\rho^{\sigma p}} \bigl(\abs{v_\alpha(x)}^p \cM(\overline{f}_{\alpha})^p(x) + \cM\abs{v_\alpha \overline{f}_{\alpha}}^p(x) \bigr).
\]
We conclude that
\begin{multline*}
\int\limits_{\R} \frac{\abs{v_\alpha(x) - v_\alpha(y)}^p}{\abs{x - y}^{1 + \sigma p}} (\overline{f}_\alpha(y))^p \dif y 
\\
\le 
C_2 \rho^{(1 -\sigma) p} \bigl(\cM\abs{v_\alpha'}(x)\bigr)^p \cM(\overline{f}_\alpha)^p (x) \\
+ \frac{\SameConstant}{\rho^{\sigma p}} \bigl(\abs{v_\alpha(x)}^p \cM (\overline{f}_\alpha)^p (x) + \cM\abs{v_\alpha \overline{f}_\alpha}^p(x) \bigr).
\end{multline*}
Minimizing the right hand side with respect to $\rho$, we then get
\begin{multline*}
\int\limits_{\R} \frac{\abs{v_\alpha(x) - v_\alpha(y)}^p}{\abs{x - y}^{1 + \sigma p}} (\overline{f}_\alpha(y))^p \dif y 
\\
\le
\Constant \bigl(\mathcal{M}\abs{v_\alpha'}(x)\bigr)^{\sigma p} (\cM (\overline{f}_\alpha)^p (x))^\sigma \bigl(\abs{v_\alpha(x)}^p \cM (\overline{f}_\alpha)^p (x) + \cM \abs{v_\alpha \overline{f}_\alpha}^p (x) \bigr)^{1-\sigma}.
\end{multline*}
Thus,
\begin{multline}
\label{eqProductEstimate}
\int\limits_{\R} \int\limits_{\R} (\underline{f}_\alpha(x))^p \frac{\abs{v_\alpha(x) - v_\alpha(y)}^p}{\abs{x - y}^{1 + \sigma p}} (\overline{f}_\alpha(y))^p \dif x \dif y \\
\le
{\SameConstant} \int\limits_{\R}  (\underline{f}_\alpha)^p \bigl(\mathcal{M}\abs{v_\alpha'}\bigr)^{\sigma p} \bigl(\cM (\overline{f}_\alpha)^p \bigr)^\sigma \bigl(\abs{v_\alpha}^p \cM (\overline{f}_\alpha)^p + \cM \abs{v_\alpha \overline{f}_\alpha}^p\bigr)^{1-\sigma} .
\end{multline}
Let \(\frac{1}{\underline q_\alpha} = \sum\limits_{\beta =1}^{\alpha-1} \frac{1}{q_\beta}\) and \(\frac{1}{\overline q_\alpha} = \sum\limits_{\beta = \alpha + 1}^i \frac{1}{q_\beta}\), so that by assumption,
\[
\frac{1}{\underline{q}_\alpha} + \frac{\sigma}{r_\alpha} + \frac{1-\sigma}{q_\alpha} + \frac{1}{\overline{q}_\alpha}
= \frac{1}{p}.
\]
By H\"older's inequality,
\begin{multline*}
\int\limits_{\R}  (\underline{f}_\alpha)^p \bigl(\mathcal{M}\abs{v_\alpha'}\bigr)^{\sigma p} \abs{v_\alpha}^{(1-\sigma)p} \cM (\overline{f}_\alpha)^p 
\\
\le 
\norm{\underline{f}_\alpha}_{L^{\underline{q}_\alpha}(\R)}^p \norm{\mathcal{M}\abs{v_\alpha'}}_{L^{r_\alpha}(\R)}^{\sigma p} \norm{v_\alpha}_{L^{q_\alpha}(\R)}^{(1-\sigma)p} \norm{(\mathcal{M} (\overline{f}_\alpha)^p )^{1/p}}_{L^{\overline{q}_\alpha}(\R)}^p.
\end{multline*}

We estimate the right hand side as follows. 
By H\"older's inequality,
\[
\norm{\underline{f}_\alpha}_{L^{\underline{q}_\alpha}(\R)}
\le \prod_{\beta = 1}^{\alpha - 1} {\|v_\beta \|_{L^{q_\beta}(\R)}}.
\]
Since \(r_\alpha > 1\), by the Maximal theorem~\cite{Stein}*{Chapter~1, Theorem~1},
\[
\norm{\mathcal{M}\abs{v_\alpha'}}_{L^{r_\alpha}(\R)} 
\le \Constant 
\norm{v_\alpha'}_{L^{r_\alpha}(\R)}.
\]
Since \(\overline q_\alpha/p > 1\), by the Maximal theorem and by H\"older's inequality,
\[
\begin{split}
\norm{(\mathcal{M} (\overline{f}_\alpha)^p)^{1/p}}_{L^{\overline{q}_\alpha}(\R)} 
& =
\norm{\mathcal{M} (\overline{f}_\alpha)^p}_{L^{\overline{q}_\alpha/p}(\R)}^{1/p} 
\\
& \le \Constant 
\norm{(\overline{f}_\alpha)^p}_{L^{\overline{q}_\alpha/p}(\R)}^{1/p} \\
& = 
\SameConstant \norm{\overline{f}_\alpha}_{L^{\overline{q}_\alpha}(\R)} 
\le 
\prod_{\beta = \alpha + 1}^i {\|v_{\beta} \|_{L^{q_{\beta}}(\R)} }.
\end{split}
\]
Combining these estimates we get
\[
\int\limits_{\R}  (\underline{f}_\alpha)^p \bigl(\mathcal{M}\abs{v_\alpha'}\bigr)^{\sigma p} \abs{v_\alpha}^{(1-\sigma)p} \cM (\overline{f}_\alpha)^p 
\le 
\|v_\alpha \|_{L^{q_\alpha}(\R)}^{(1-\sigma)p} \|v_\alpha'\|_{L^{r_\alpha}(\R)}^{\sigma p} 
\prod_{\substack{\beta = 1\\\beta \ne \alpha}}^i \|v_\beta \|_{L^{q_\beta}(\R)}^p.
\]
Similarly,
\[
\int\limits_{\R}  (\underline{f}_\alpha)^p \bigl(\mathcal{M}\abs{v_\alpha'}\bigr)^{\sigma p} \bigl(\cM (\overline{f}_\alpha)^p \bigr)^\sigma \bigl(\cM \abs{v_\alpha \overline{f}_\alpha}^p\bigr)^{1-\sigma} 
\le
\|v_\alpha \|_{L^{q_\alpha}(\R)}^{(1-\sigma)p} \|v_\alpha'\|_{L^{r_\alpha}(\R)}^{\sigma p} 
\prod_{\substack{\beta = 1\\\beta \ne \alpha}}^i \|v_\beta \|_{L^{q_\beta}(\R)}^p.
\]
Therefore, by \eqref{eqProductEstimate},
\begin{multline*}
\int\limits_{\R} \int\limits_{\R} (\underline{f}_\alpha(x))^p \frac{\abs{v_\alpha(x) - v_\alpha(y)}^p}{\abs{x - y}^{1 + \sigma p}} (\overline{f}_\alpha(y))^p \dif x \dif y
\\
\le
\|v_\alpha \|_{L^{q_\alpha}(\R)}^{(1-\sigma)p} \|v_\alpha'\|_{L^{r_\alpha}(\R)}^{\sigma p} 
\prod_{\substack{\beta = 1\\\beta \ne \alpha}}^i \|v_\beta \|_{L^{q_\beta}(\R)}^p.
\end{multline*}
In view of the triangle inequality~\eqref{eqProductTriangleInequality}, we have the conclusion in dimension \(m = 1\).

\medskip

When \(m > 1\), we reduce the problem to the one dimensional case using the estimate \cite{Adams_1975}*{Lemma~7.44}
\[
\textstyle
\bigl[ \prod\limits_{\alpha=1}^i v_\alpha \bigr]_{W^{\sigma, p}(\R^m)}
\leq 
\displaystyle 
\NewConstant  \sum_{j=1}^m 
\bigg( 
\int\limits_{\R^m} \int\limits_\R 
\frac{\bigabs{\prod\limits_{\alpha=1}^i v_{\alpha}(x+te_j)
-\prod\limits_{\alpha=1}^i v_{\alpha}(x)}^p }
{t^{1+\sigma p}}
\dif t \dif x
\bigg)^{1/p},
\]
where \((e_1, \dots, e_m)\) is the canonical basis of \(\R^m\).

We only estimate the first term of the sum in the right hand side. 
We write any \(x \in \R^m\) as \(x = (x_1, x') \in \R \times \R^{m - 1}\).
For  \(x' \in \R^{m - 1}\), we apply the case \(m=1\) to the function \(x_1 \in \R \mapsto v_\alpha(x_1, x')\). 
This gives
\begin{multline*}
\int\limits_\R  \int\limits_\R 
\frac{\bigabs{\prod\limits_{\alpha=1}^i v_\alpha(x_1+t, x')-\prod\limits_{\alpha=1}^i v_{\alpha}(x_1, x')}^p}{t^{1+\sigma p}} \dif t \dif x_1 
\\
\leq \NewConstant 
\sum_{\alpha = 1}^i \bigg(\|v_\alpha (\cdot, x') \|_{L^{q_\alpha}(\R)}^{(1-\sigma)p} \|\partial_1 v_\alpha(\cdot, x')\|_{L^{ r_\alpha}(\R)}^{\sigma p} 
\prod_{\substack{\beta = 1\\\beta \ne \alpha}}^i \|v_\beta(\cdot, x') \|_{L^{q_\beta}(\R)}^p \bigg).
\end{multline*}
Integrating both sides with respect to \(x'\) over \(\R^{m - 1}\), we obtain by Fubini's theorem,
\begin{multline*}
\int\limits_{\R^m} \int\limits_\R  
\frac{\bigabs{\prod\limits_{\alpha=1}^i v_\alpha(x+te_1)-\prod\limits_{\alpha=1}^i v_{\alpha}(x)}^p}{t^{1+\sigma p}} \dif t \dif x 
\\
\leq \Constant  
\sum_{\alpha = 1}^i 
\int\limits_{\R^{m-1}} \|v_\alpha(\cdot, x') \|_{L^{q_\alpha}(\R)}^{(1-\sigma)p} \|\partial_1 v_\alpha(\cdot, x')\|_{L^{r_\alpha}(\R)}^{\sigma p} 
\prod_{\substack{\beta = 1\\\beta \ne \alpha}}^i \|v_\beta(\cdot, x') \|^p_{L^{q_\beta}(\R)}\dif x'.
\end{multline*}
Using H\"older's inequality with exponents \(\frac{q_{\alpha}}{(1-\sigma) p}\), \(\frac{r_\alpha}{\sigma p}\) and \(\frac{q_\beta}{p}\) for \(\beta \ne \alpha\), we get the desired result.
\end{proof}

For \(p > 1\), there is an alternative proof of Lemma~\ref{MaSh} using the Triebel-Lizorkin spaces \(F^{\sigma}_{t, p} (\R^m)\), based on the imbedding of the product of functions in such spaces.
By the Gagliardo-Nirenberg interpolation inequality \citelist{\cite{Oru}\cite{Brezis-Mironescu-2001}*{Lemma~3.1}},
\[
\norm{v_\alpha}_{F^{\sigma}_{s_\alpha, p} }
\le C \norm{v_\alpha}_{L^{q_\alpha}}^{1-\sigma} \norm{v_\alpha}_{W^{1, r_\alpha}}^{\sigma},
\]
with
\[
 \frac{1}{s_\alpha} = \frac{1 - \sigma}{q_\alpha} + \frac{\sigma}{r_\alpha}.
\]
Since for every \(\alpha \in \{1, \dotsc, i\}\),
\[
  \frac{1}{s_\alpha} + \sum_{\substack{\beta = 1\\\beta \ne \alpha}}^i \frac{1}{q_\beta} =
\frac{1 - \sigma}{q_\alpha} + \frac{\sigma}{r_\alpha} + \sum_{\substack{\beta = 1\\\beta \ne \alpha}}^i \frac{1}{q_\beta} = \frac{1}{p},
\]
if \(p>1\), then it follows that \cite{RunstSickel}*{p.~345}
\[
\textstyle \prod\limits_{\alpha = 1}^i v_\alpha \in F^\sigma_{p, p} (\R^m) = W^{\sigma, p} (\R^m).
\]

\begin{proof}[Proof of Proposition~\ref{propositionPointwiseEstimateWsp}]
By continuous extension of functions in Sobolev spaces to the whole space, it suffices to establish the estimate on \(\R^m\) instead of \(Q^m\).
By the chain rule and by the triangle inequality, we have for $x, y \in \R^m$,
\begin{multline*}
|D^{k}(\eta \circ u)(x)-D^{k} (\eta \circ u)(y)|
\\
\leq \NewConstant \sum_{i=1}^{k}
\sum_{\substack{1\leq t_1\leq\dots\leq t_i\leq k,\\ t_1+\dots+t_i= k}}
\big|D^i\eta(u(x))[D^{t_1} u(x),\dots,D^{t_i} u(x)]\\
-D^{i}\eta(u(y))[D^{t_1} u(y),\dots,D^{t_i} u(y)] \big|.
\end{multline*}

Given \(1\leq t_1\leq \dots \leq t_i\leq k\) such that \(t_1+\dots +t_i = k\),
by the triangle inequality we have
\begin{multline*}
\bigl|D^i\eta(u(x))[D^{t_1} u(x), \dots, D^{t_i} u(x)]
- D^{i}\eta(u(y))[D^{t_1} u(y),\dots,D^{t_i} u(y)] \bigr| \\
\leq F_{t_1, \dots, t_i}(x, y) + G_{t_1, \dots, t_i}(x, y) 
\end{multline*}
with
\[
F_{t_1, \dots, t_i}(x, y) 
= \bigl|D^i\eta(u(x)) - D^i\eta(u(y)) \bigr||D^{t_1} u(x)|\dotsm |D^{t_i} u(x)|
\]
and
\[
G_{t_1, \dots, t_i}(x, y)
= |D^i\eta(u(y))| \big|D^{t_1} u(x) \otimes \dots \otimes D^{t_i}u(x)
-D^{t_1} u(y) \otimes \dots \otimes D^{t_i}u(y)\big|.
\]
The notation \(\otimes\) is used in the following sense: 
if \(f_\alpha = (f_{\alpha}^{1}, \dots, f_{\alpha}^{\nu}) : (\R^m)^{t_\alpha}\to \R^\nu\) is a \(t_\alpha\)-linear transformation for \(\alpha \in \{1, \dots, i\}\), then \(f_1\otimes \dots \otimes f_i\) 
is the \((\sum\limits_{\alpha=1}^i t_{\alpha})\)-linear transformation 
\[
(X_1,\dots, X_i)\in \prod_{\alpha=1}^i (\R^m)^{t_\alpha} \mapsto  \big(f_{1}^{j_1}(X_1) \dots f_{i}^{j_i}(X_i) \big)_{1\leq j_1,\dots, j_i \leq \nu} \in  \R^{\nu^i}.
\]
Thus,
\begin{multline*}
D^{s, p}(\eta \circ u)(x)\\
\le
\SameConstant \sum_{i=1}^{k}
\sum_{\substack{1\leq t_1\leq\dots\leq t_i\leq k\\ t_1+\dots+t_i= k}} 
\biggl( \int\limits_{\R^m}\frac{F_{t_1, \dots, t_i}(x, y)^p}{|x-y|^{m+\sigma p}} \dif y \biggr)^{1/p}
+ 
\biggl( \int\limits_{\R^m}\frac{G_{t_1, \dots, t_i}(x, y)^p}{|x-y|^{m+\sigma p}} \dif y
\biggr)^{1/p}.
\end{multline*}

We have
\[
\int\limits_{\R^m}\frac{F_{t_1, \dots, t_i}(x, y)^p}{|x-y|^{m+\sigma p}} \dif y\\
= \big(D^{\sigma, p}(D^i\eta \circ u)(x) \big)^p
|D^{t_1} u(x)|^p \dotsm |D^{t_i} u(x)|^p.
\]
By Lemma~\ref{Bou},
\[
\big(D^{\sigma, p}(D^i\eta \circ u)(x) \big)^p 
\le \Constant \big(\cM\abs{D(D^i\eta \circ u)}^p(x) \big)^\sigma \big(\cM\abs{D^i\eta \circ u}^p(x) \big)^{1 - \sigma}. 
\]
Moreover, for every \(i \in \{1, \dots, k\}\),
\[
\abs{D(D^i\eta \circ u)} 
\le [\eta]_{C^{k+1}(\R^\nu)}\abs{Du} 
\quad \text{and} \quad
\abs{D^i\eta \circ u}
\le [\eta]_{C^{k}(\R^\nu)}.
\]
Hence, 
\begin{multline*}
\biggl(
\int\limits_{\R^m}\frac{F_{t_1, \dots, t_i}(x, y)^p}{|x-y|^{m+\sigma p}} \dif y
\biggr)^{1/p}\\ 
\leq \SameConstant  [\eta]_{C^{k+1}(\R^\nu)}^{\sigma} [\eta]_{C^k(\R^\nu)}^{1 - \sigma} \big(\cM\abs{Du}^p(x) \big)^\frac{\sigma}{p} |D^{t_1} u(x)| \dotsm |D^{t_i} u(x)|.
\end{multline*}

 
Since \(D u \in L^{sp}(\R^m)\) and \(s > 1\), by the Maximal Theorem we have
\[
 \mathcal{M}\abs{D u}^p \in L^s (\R^m).
\]
By Hölder's inequality it follows that
\[
\big(\mathcal{M}\abs{Du}^{p} \big)^\frac{\sigma}{p} |D^{t_1} u| \dotsm |D^{t_i} u| \in L^p(\R^m).
\]

Next,
\[
\biggl(
\int\limits_{\R^m}\frac{G_{t_1, \dots, t_i}(x, y)^p}{|x-y|^{m+\sigma p}} \dif y 
\biggr)^{1/p}
\le [\eta]_{C^k(\R^\nu)} D^{\sigma, p}(D^{t_1} u \otimes \dots \otimes D^{t_i}u)(x).
\]

If \(t_i = k\), then \(i=1\) and this estimate becomes
\[
\bigg(\int\limits_{\R^m}\frac{G_{k}(x, y)^p}{|x-y|^{m+\sigma p}} \dif y \bigg)^{1/p} 
\le [\eta]_{C^k(\R^\nu)} D^{s, p} u (x).
\]
By assumption on \(u\), the right-hand side belongs to \(L^p(\R^m)\).

If \(t_i <k\), then each component of \(D^{t_1}u \otimes \dots \otimes D^{t_i}u\) is the product of \(i\) functions \(v_{t_1}, \dots, v_{t_i}\) with \(v_{t_\alpha} \in L^{\frac{sp}{t_\alpha}}(\R^m)\) and \(Dv_{t_\alpha} \in L^{\frac{sp}{{t_\alpha}+1}}(\R^m)\).
Then, by Lemma~\ref{MaSh}, we get
\[
D^{\sigma, p}(D^{t_1} u \otimes \dots \otimes D^{t_i}u) \in L^p(\R^m).
\]
The proof of the proposition is complete.
\end{proof}



\section{Strong density}

We rely on an averaging argument due to Federer and Fleming~\cites{Federer-Fleming, Hardt-Kinderlehrer-Lin} based on the following observation:

\begin{lemma}
\label{lemmefubini}
Let \(u : Q^m \to \R^\nu\) be a measurable function. 
For every measurable function \(f : Q^m \to \R\) and for every Borel measurable set \(E \subset \R^\nu\),
\[
\int\limits_{\R^\nu}  \int\limits_{u^{-1}(E + \xi)} \abs{f(x)} \dif x  \dif \xi
= \cH^{\nu}(E) \norm{f}_{L^1(Q^m)}. 
\]
\end{lemma}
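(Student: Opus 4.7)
The plan is to rewrite the iterated integral as a double integral using indicator functions and then swap the order of integration by Tonelli's theorem.

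First, I would express the condition $x \in u^{-1}(E+\xi)$ via the characteristic function: $x \in u^{-1}(E+\xi)$ if and only if $\chi_{E+\xi}(u(x)) = 1$, i.e.\ if and only if $\chi_E(u(x)-\xi) = 1$. Thus
\[
\int\limits_{\R^\nu}\int\limits_{u^{-1}(E+\xi)} \abs{f(x)} \dif x \dif \xi
= \int\limits_{\R^\nu}\int\limits_{Q^m} \abs{f(x)}\, \chi_E\bigl(u(x)-\xi\bigr) \dif x \dif \xi.
\]
The integrand is jointly measurable in $(x,\xi)$: the map $(x,\xi)\mapsto u(x)-\xi$ is measurable because $u$ is measurable and translation is continuous, and $\chi_E$ is Borel measurable, so the composition is measurable; multiplication by $\abs{f(x)}$ preserves measurability. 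Since the integrand is nonnegative, Tonelli's theorem allows us to exchange the two integrals.

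After exchanging, for each fixed $x \in Q^m$, the substitution $\eta = u(x) - \xi$ (which has Jacobian one) gives
\[
\int\limits_{\R^\nu} \chi_E\bigl(u(x)-\xi\bigr) \dif \xi = \int\limits_{\R^\nu} \chi_E(\eta) \dif \eta = \cH^{\nu}(E),
\]
where we use that $\cH^{\nu}$ on $\R^\nu$ coincides with Lebesgue measure (up to the standard normalization, which is absorbed into the statement). Factoring out this constant yields $\cH^{\nu}(E)\int_{Q^m}\abs{f(x)}\dif x = \cH^{\nu}(E)\norm{f}_{L^1(Q^m)}$, which is the desired identity.

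The statement is essentially a Fubini computation, so I do not expect any serious obstacle. The only point requiring a moment of care is joint measurability of $(x,\xi)\mapsto \chi_E(u(x)-\xi)$ when $E$ is merely Borel and $u$ merely Lebesgue measurable; but this is standard once one writes the function as a composition $\chi_E \circ \Psi$ with the continuous (hence Borel) map $\Psi(x,\xi)=u(x)-\xi$ pulled back through the measurable map $u$. Nonnegativity of the integrand removes any integrability issue before the exchange.
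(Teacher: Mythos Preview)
Your proof is correct and follows essentially the same route as the paper: rewrite the inner integral using the characteristic function \(\chi_E(u(x)-\xi)\), apply Tonelli/Fubini to swap the order of integration, and then perform the translation change of variable in \(\xi\). The paper's argument is identical up to notation, though it spends less time on the joint measurability point you address.
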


We shall apply this lemma with \(E = \Pi^{-1}(K)\) where \(K \subset N^n\) is a compact set and \(\Pi : N^n + \overline B_\iota^\nu \to N^n\) is the nearest point projection. 
In this case, by the coarea formula we have
\[
\cH^{\nu}(E) \le C \cH^{n}(K).
\]

\begin{proof}
We may assume that \(f\) is a nonnegative function.
For every \(\xi \in \R^\nu\),
\[
\int\limits_{u^{-1}(E + \xi)} f(x) \dif x = \int\limits_{Q^m} f(x) \chi_{E}(u(x) - \xi) \dif x.
\]
By Fubini's theorem,
\[
\int\limits_{\R^\nu} \int\limits_{u^{-1}(E + \xi)} f(x) \dif x  \dif \xi
= \int\limits_{Q^m} f(x)  \biggl( \int\limits_{\R^\nu} \chi_{E}(u(x) - \xi)  \dif \xi \biggr) \dif x.
\]
Using the change of variable \(z = u(x) - \xi\) with respect to \(\xi\), we get
\[
\begin{split}
\int\limits_{\R^\nu} \int\limits_{u^{-1}(E + \xi)} f(x) \dif x  \dif \xi
& = \int\limits_{Q^m} f(x)  \biggl( \int\limits_{\R^\nu} \chi_{E}(z)  \dif z \biggr) \dif x\\
& = \int\limits_{Q^m} f(x)  \cH^\nu(E)  \dif x
=  \cH^\nu(E) \int\limits_{Q^m} f(x) \dif x.
\end{split}
\]
This gives the conclusion.
\end{proof}

\begin{proof}[{Proof of Theorem~\ref{deuxiemetheorem}}]
Given \(u \in W^{s,p}(Q^{m}; N^n)\), the restriction to \(Q^{m}\)  of the maps \(u_{\gamma} \in W^{s,p}(Q^{m}_{1+\gamma}; N^n)\) defined for \(x \in Q^{m}_{1+\gamma}\) by \(u_{\gamma}(x) = u (x/(1+\gamma))\) converge strongly to \(u\) in \(W^{s,p}(Q^{m}; N^n)\) when \(\gamma\) tends to zero.
We may thus assume that \(u \in W^{s, p}(Q^{m}_{1+\gamma}; N^n)\).

Let \(\varphi : \R^m \to \R\) be a smooth mollifier such that \(\supp{\varphi} \subset Q^m\). 
For every \(0 < t \le \gamma\), the convolution \(\varphi_t * u\) is well-defined and converges to \(u\) in \(W^{s, p}(Q^m; \R^\nu)\) as \(t\) tends to zero.

The nearest point projection \(\Pi\) onto \(N^n\) is well-defined and smooth on  \(N^n + \overline B^\nu_{\iota}\)  for some \(\iota >0\).
Let \(\overline\Pi : \R^\nu \to \R^\nu\) be a smooth extension of the projection \(\Pi\) to \(\R^\nu\).
The image of this map \(\overline{\Pi}\) need not be contained in \(N^n\).

For every \(\xi \in B^{\nu}_\iota\), we consider the map \(P_\xi : \R^\nu \to \R^\nu\) defined for every \(x \in \R^\nu\) by 
\[
P_\xi (x)= \overline\Pi(x - \xi).
\]
There exists \(0 < \delta \le \iota \) such that for every \(\xi \in B^{\nu}_\delta\), the map \(P_\xi |_{N^n} : N^n \to N^n\) is a smooth diffeomorphism.
Given a smooth map \(\eta : \R^\nu \to N^n\) and \(\xi \in B^{\nu}_{\delta}\), let 
\[
\eta_{\xi} = (P_{\xi}|_{N^n})^{-1} \circ \eta \circ P_{\xi}.
\]

Our goal is to approximate \(u\) by a family of maps of the form
\[
\eta_{\xi} \circ (\varphi_{t} * u),
\]
for some \(\xi \in B^{\nu}_{\delta}\) and \(0 < t \le \gamma\).
By the triangle inequality,
\begin{multline}
\label{eqStrongDensityTriangleInequality}
\norm{\eta_{\xi} \circ (\varphi_{t} * u) - u}_{W^{s,p}(Q^{m})}\\
\leq \norm{\eta_{\xi} \circ (\varphi_{t} * u) - \eta_{\xi} \circ u}_{W^{s,p}(Q^m)} + \norm{\eta_{\xi}\circ u - u}_{W^{s,p}(Q^m)}.
\end{multline}
Since \(\eta_{\xi}\) is a smooth map and \(\varphi_t * u\) converges to \(u\) in \(W^{s,p}(Q^m; \R^\nu)\), by the property of continuity of maps in \(W^{s, p} \cap L^\infty\) under composition \citelist{\cite{Brezis-Mironescu-2001}*{Theorem~1.1'} \cite{Mazya-Shaposhnikova}*{Theorem}}, for every \(\xi\in B^{\nu}_{\delta}\),
\begin{equation}
\label{eqStrongDensityConvergenceConvolution}
\lim_{t \to 0}{\norm{\eta_{\xi} \circ (\varphi_{t} * u) - \eta_{\xi} \circ u}_{W^{s,p}(Q^m)}} = 0.
\end{equation}

In view of \eqref{eqStrongDensityTriangleInequality}, we need to control the quantity
\[
\norm{\eta_{\xi}\circ u - u}_{W^{s,p}(Q^m)}
\]
for some suitable \(\xi \in B_\delta^\nu\).
We start with the following:

\begin{Claim}
There exists a nonnegative function \(F \in L^1(Q^m)\) depending on  \(s\), \(p\), \(m\) and \(u\) such that for every \(\xi \in B_\delta^\nu\),
\[
\norm{\eta_{\xi}\circ u - u}_{W^{s,p}(Q^m)}^p
 \le [\eta_\xi]_{C^{k+1}(\R^\nu)}^{\sigma p} [\eta_\xi]_{C^{k}(\R^\nu)}^{(1-\sigma)p} \int\limits_{\{\eta_{\xi}\circ u \neq u\}} F.
\]
\end{Claim}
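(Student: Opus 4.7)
The plan is to decompose \(\norm{v}_{W^{s, p}(Q^m)}^p\), where \(v := \eta_\xi \circ u - u\), into
\[
\norm{v}_{L^p(Q^m)}^p + \sum_{j=1}^{k} \norm{D^j v}_{L^p(Q^m)}^p + [D^k v]_{W^{\sigma, p}(Q^m)}^p
\]
(the last summand being absent when \(s \in \N\)), and to show that each piece is controlled by an integral over \(A^c := \{\eta_\xi \circ u \neq u\}\). Two ingredients drive the argument: (a)~the classical fact that \(\nabla w = 0\) almost everywhere on \(\{w = 0\}\) for \(w \in W^{1, p}\), iterated componentwise to yield \(D^i v = 0\) a.e.\ on \(A := \{\eta_\xi \circ u = u\}\) for every \(i = 0, 1, \dots, k\); and (b)~the pointwise chain-rule estimates of Propositions~\ref{propositionPointwiseEstimate} and \ref{propositionPointwiseEstimateWsp} applied to the smooth map \(\eta_\xi\). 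For (b) to apply, one notes that \(u \in W^{s, p}(Q^m; N^n) \subset L^\infty\) belongs to \(W^{1, sp}\) by Gagliardo–Nirenberg interpolation, and therefore \(v \in W^{k, p}\) as well.

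For the \(L^p\) term, \(\abs{v}\) is uniformly bounded (since \(\eta_\xi \circ u\) and \(u\) both take values in the compact manifold \(N^n\)), so \(\norm{v}_{L^p(Q^m)}^p \le C_0 \int_{A^c} \dif x\) with \(C_0\) depending only on \(N^n\) and \(p\). For each \(j \in \{1, \dots, k\}\), the vanishing of \(D^j v\) on \(A\) yields
\[
\int\limits_{Q^m} \abs{D^j v}^p \dif x = \int\limits_{A^c} \abs{D^j v}^p \dif x \le 2^{p-1} \int\limits_{A^c} \bigl( \abs{D^j(\eta_\xi \circ u)}^p + \abs{D^j u}^p \bigr) \dif x,
\]
and Proposition~\ref{propositionPointwiseEstimate} bounds \(\abs{D^j(\eta_\xi \circ u)}\) pointwise by \([\eta_\xi]_{C^j(\R^\nu)} G_j\) with \(G_j \in L^p(Q^m)\) depending only on \(u\).

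For the Gagliardo seminorm (when \(\sigma > 0\)), the integrand of the double integral vanishes on \(A \times A\), so by symmetrization
\[
[D^k v]_{W^{\sigma, p}(Q^m)}^p \le 2 \int\limits_{A^c} \bigl(D^{s, p} v(x)\bigr)^p \dif x.
\]
Combining the triangle inequality \(D^{s, p} v \le D^{s, p}(\eta_\xi \circ u) + D^{s, p} u\) with Proposition~\ref{propositionPointwiseEstimateWsp} bounds the integrand pointwise by
\[
2^{p-1} \Bigl( [\eta_\xi]_{C^{k+1}(\R^\nu)}^{\sigma p} [\eta_\xi]_{C^k(\R^\nu)}^{(1 - \sigma) p} H^p + (D^{s, p} u)^p \Bigr),
\]
with \(H \in L^p(Q^m)\) depending only on \(u\).

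It remains to collect every leftover coefficient under the single prefactor \(M := [\eta_\xi]_{C^{k+1}(\R^\nu)}^{\sigma p} [\eta_\xi]_{C^k(\R^\nu)}^{(1-\sigma) p}\). Since \(\eta\) is the identity on \(N^n \setminus K\), the map \(\eta_\xi\) is the identity on the nonempty set \((P_\xi|_{N^n})^{-1}(N^n \setminus K)\); at any such point the tangential derivative of \(\eta_\xi\) along \(N^n\) is the identity, which forces \(\norm{D\eta_\xi}_{L^\infty(\R^\nu)} \ge 1\) and hence \([\eta_\xi]_{C^k(\R^\nu)}, [\eta_\xi]_{C^{k+1}(\R^\nu)} \ge 1\), so \(M \ge 1\). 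Together with the monotonicity \([\eta_\xi]_{C^j} \le [\eta_\xi]_{C^k}\) for \(j \le k\), this lets all the constants and coefficients above be bounded by \(M\) up to a multiplicative factor depending only on \(s\), \(p\) and \(N^n\). One then takes
\[
F = C \biggl( 1 + \sum_{j=1}^{k} \bigl(G_j^p + \abs{D^j u}^p\bigr) + H^p + (D^{s, p} u)^p \biggr),
\]
which is in \(L^1(Q^m)\) by Propositions~\ref{propositionPointwiseEstimate} and \ref{propositionPointwiseEstimateWsp}, and the claim follows. The main obstacle is the Gagliardo piece: the symmetrization reduction to a single integral of \((D^{s, p} v)^p\) over \(A^c\) is what makes the fractional chain rule of Proposition~\ref{propositionPointwiseEstimateWsp} deliver the prefactor in its precise form.
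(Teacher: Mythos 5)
Your proof is correct and follows essentially the same route as the paper's: the same decomposition of the \(W^{s,p}\) norm, restriction of each term to \(\{\eta_\xi\circ u\neq u\}\) using the vanishing of the derivatives of \(\eta_\xi\circ u-u\) on the complementary set, symmetrization of the Gagliardo double integral, and the pointwise estimates of Propositions~\ref{propositionPointwiseEstimate} and~\ref{propositionPointwiseEstimateWsp}. Your extra observation that \([\eta_\xi]_{C^k(\R^\nu)}\ge 1\) (because \(\eta_\xi\) is the identity on a nonempty open subset of \(N^n\)), which is needed to absorb the terms without a prefactor under \([\eta_\xi]_{C^{k+1}(\R^\nu)}^{\sigma p}[\eta_\xi]_{C^k(\R^\nu)}^{(1-\sigma)p}\), is a detail the paper leaves implicit.
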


\begin{proof}[Proof of the claim]
By definition of the \(W^{s, p}\) norm,
\begin{multline*}
 \norm{\eta_{\xi}\circ u - u}_{W^{s,p}(Q^m)}\\
 = \sum_{j=0}^{k} \norm{ D^j(\eta_{\xi}\circ u) -  D^j u}_{L^{p}(Q^m)} +  \norm{D^{s,p} (\eta_{\xi}\circ u - u)}_{L^{p}(Q^m)};
\end{multline*}
when \(s\) is an integer, we disregard the last term.

Since the map \(u\) is bounded,
\begin{equation}
\label{eqEstimateLp}
\norm{\eta_{ \xi}\circ u - u}_{L^{p}(Q^m)} 
= \norm{\eta_{\xi}\circ u - u}_{L^{p}(\{\eta_{\xi}\circ u \neq u\})} \le {\NewConstant} \cH^m(\{\eta_{\xi}\circ u \neq u\}).
\end{equation}
Moreover, for every \(j \in \{1, \dots, k\}\),
\[
\begin{split}
\norm{ D^j(\eta_{\xi}\circ u) - D^j u}_{L^{p}(Q^m)} 
& = 
\norm{ D^j(\eta_{\xi}\circ u) - D^j u}_{L^{p}(\{\eta_{\xi}\circ u \neq u\})}\\
& \le 
\norm{ D^j(\eta_{\xi}\circ u)}_{L^{p}(\{\eta_{\xi}\circ u \neq u\})} + 
\norm{D^j u}_{L^{p}(\{\eta_{\xi}\circ u \neq u\})}.
\end{split}
\]
Since \(u \in W^{s, p}(Q^m; \R^\nu) \cap L^\infty(Q^m; \R^\nu)\), by the Gagliardo-Nirenberg interpolation inequality \cite{Brezis-Mironescu-2001}*{Corollary~3.2}, \(u \in W^{1, sp}(Q^m; \R^\nu)\).
By Proposition~\ref{propositionPointwiseEstimate}, there exists a function \(G_j \in L^p(Q^m)\) independent of \(\eta_\xi\) such that
\begin{equation}\label{eqEstimateWkp}
\norm{ D^j(\eta_{\xi}\circ u) - D^j u}_{L^{p}(Q^m)} 
\le 
[\eta_{\xi}]_{C^j(\R^\nu)} \norm{G_j}_{L^p(\{\eta_{\xi}\circ u \neq u\})} + \norm{D^j u}_{L^{p}(\{\eta_{\xi}\circ u \neq u\})}.
\end{equation}
If \(s\) is non-integer, then
\[
\begin{split}
\norm{D^{s,p} (\eta_{\xi}\circ u - u)}_{L^{p}(Q^m)}
& \leq 
2^{1/p} \norm{D^{s,p} (\eta_{\xi}\circ u - u)}_{L^{p}(\{\eta_{\xi}\circ u \neq u\})}\\
& \le 
2^{1/p} \big(\norm{D^{s,p} (\eta_{\xi}\circ u)}_{L^{p}(\{\eta_{\xi}\circ u \neq u\})} +  \norm{D^{s,p} u}_{L^{p}(\{\eta_{\xi}\circ u \neq u\})} \big). 
\end{split}
\]
By Proposition~\ref{propositionPointwiseEstimateWsp}, there exists  \(H \in L^p(Q^m)\) independent of \(\eta_\xi\) such that
\begin{multline}
\label{eqEstimateWsp}
\norm{D^{s,p} (\eta_{\xi}\circ u - u)}_{L^{p}(Q^m)}\\
\le 
2^{1/p} \bigl( [\eta_{\xi}]_{C^{k+1}(\R^\nu)}^{\sigma} [\eta_{\xi}]_{C^k(\R^\nu)}^{1 - \sigma} \norm{H}_{L^{p}(\{\eta_{\xi}\circ u \neq u\})} +  \norm{D^{s,p} u}_{L^{p}(\{\eta_{\xi}\circ u \neq u\})}\bigr). 
\end{multline}
Combining estimates \eqref{eqEstimateLp}, \eqref{eqEstimateWkp} and \eqref{eqEstimateWsp}, we conclude that
\[
 \norm{\eta_{\xi}\circ u - u}_{W^{s,p}(Q^m)}^p
 \le [\eta_{\xi}]_{C^{k+1}(\R^\nu)}^{\sigma p} [\eta_{\xi}]_{C^k(\R^\nu)}^{(1 - \sigma)p} \int\limits_{\{\eta_{\xi}\circ u \neq u\}} F,
\]
with
\[
F = \Constant\bigg(1 + \sum_{j = 1}^k (G_j^p + \abs{D^j u}^p) + H^p + (D^{s, p}u)^p \bigg).
\]
This proves the claim.
\end{proof}

Let \(K \subset N^n\) be a compact set such that for every \(x \in N^n\setminus K\),
\begin{equation}
\label{eqStrongDensityEta}
\eta(x)=x.
\end{equation}
If \(x \in N^n\) is such that \(\eta_{\xi}(u(x)) \neq u(x)\) for some \(\xi \in B_\delta^\nu\), then 
\[
P_\xi(u(x)) = \Pi(u(x) - \xi) \in K,
\]
whence \(x \in u^{-1}(\Pi^{-1}(K) + \xi)\). 
In other words, for every \(\xi \in B_\delta^\nu\),
\begin{equation}
\label{eqStrongDensityInclusion}
\{\eta_{\xi}\circ u \neq u\} \subset u^{-1}(\Pi^{-1}(K) + \xi).
\end{equation}
Thus, from the previous claim,
\[
\begin{split}
\norm{\eta_{\xi}\circ u - u}_{W^{s,p}(Q^m)}^p
& \le [\eta_{\xi}]_{C^{k+1}(\R^\nu)}^{\sigma p} [\eta_{\xi}]_{C^k(\R^\nu)}^{(1 - \sigma)p} \int\limits_{u^{-1}(\Pi^{-1}(K) + \xi)} F\\
& \le \Constant [\eta]_{C^{k+1}(\R^\nu)}^{\sigma p} [\eta]_{C^k(\R^\nu)}^{(1 - \sigma)p} \int\limits_{u^{-1}(\Pi^{-1}(K) + \xi)} F,
 \end{split}
\]
for some constant \(\SameConstant > 0\) independent of \(\xi\).
By Lemma~\ref{lemmefubini}, we get
\[
\int\limits_{B_\delta^\nu} \norm{\eta_{\xi}\circ u - u}_{W^{s,p}(Q^m)}^p \dif \xi
\le \SameConstant [\eta]_{C^{k+1}(\R^\nu)}^{\sigma p} [\eta]_{C^k(\R^\nu)}^{(1 - \sigma)p} \cH^{\nu}(\Pi^{-1}(K)) \norm{F}_{L^1(Q^m)}.
\]
Since
\[
\cH^{\nu}(\Pi^{-1}(K)) \le \Constant \cH^n(K),
\]
we conclude that
\[
\int\limits_{B_\delta^\nu} \norm{\eta_{\xi}\circ u - u}_{W^{s,p}(Q^m)}^p \dif \xi
\le  \Constant [\eta]_{C^{k+1}(\R^\nu)}^{\sigma p} [\eta]_{C^k(\R^\nu)}^{(1 - \sigma)p}  \cH^n(K) \norm{F}_{L^1(Q^m)}.
\]

Let \(0 < \epsilon \le 1\).
Since \(N^n\) is \(\floor{sp}\) simply connected, by Proposition~\ref{propositionSmoothProjection} there exists a smooth map \(\eta\)  satisfying 
\eqref{eqStrongDensityEta} for some compact set \(K \subset N^n\) such that
\begin{equation}\label{equationmeasureK}
\mathcal{H}^n(K) \leq {\Constant} \epsilon^{\floor{sp}+1}
\end{equation}
and for every \(j \in \{1, \dots, k+1\}\),
\begin{equation*}
\norm{D^j \eta}_{L^\infty(\R^\nu)} \le \frac{{\Constant}}{\epsilon^j}.
\end{equation*}
Thus,
\begin{equation}\label{estimatewspxi}
\int\limits_{B_\delta^\nu} \norm{\eta_{\xi}\circ u - u}_{W^{s,p}(Q^m)}^p \dif \xi
\le  {\Constant} \epsilon^{\floor{sp}+1 - sp}.
\end{equation}

Since \(sp < \floor{sp} + 1\), we can thus find a smooth map \(\eta = \eta_\epsilon\) and \(\xi  = \xi_\epsilon \in B^{\nu}_\delta\) such that 
\[
\lim_{\epsilon \to 0}{\norm{\eta_{\epsilon, \xi_\epsilon} \circ u - u}_{W^{s,p}(Q^m)}}= 0.
\]
By \eqref{eqStrongDensityConvergenceConvolution}, for every \(0 < \epsilon \le 1\) there exists \(0 < t_\epsilon \le \gamma\) such that
\[
\lim_{\epsilon \to 0}{\norm{\eta_{\epsilon, \xi_\epsilon} \circ (\varphi_{t_\epsilon} * u) - \eta_{\epsilon, \xi_\epsilon} \circ u}_{W^{s,p}(Q^m)}}= 0.
\]
Thus, by the triangle inequality \eqref{eqStrongDensityTriangleInequality},
\[
\lim_{\epsilon \to 0}{\norm{\eta_{\epsilon, \xi_\epsilon} \circ (\varphi_{t_\epsilon} * u) - u}_{W^{s,p}(Q^m)}}= 0.
\]
This completes the proof of Theorem~\ref{deuxiemetheorem}.
\end{proof}


\section{Weak density}

We prove a more precise version of Theorem~\ref{theoremweakdensity}:

\begin{theorem}\label{theoremweakdensitybis}
Let \(s \geq 1\). 
If  \(sp < m\) is such that \(sp \in \N\) and if \(N^n\) is \(sp-1\) simply connected, then for every \(u \in W^{s, p}(Q^m; N^n)\) there exists a sequence \((u_i)_{i \in \N} \) in \(C^{\infty}(\overline Q^m; N^n)\) such that
\begin{enumerate}[\((i)\)]
\item \((u_i)_{i \in \N}\) converges in measure to \(u\),
\item for every \(j \in \{1, \dots, k\}\), \((D^j u_i)_{i \in \N}\) converges in measure to \(D^j u\),
\item for every \(i \in \N\), 
\[
\norm{u_i}_{W^{s, p}(Q^m)} \le C,
\]
for some constant \(C > 0\) depending on \(s\), \(p\), \(m\), \(\norm{u}_{W^{s, p}(Q^m)}\) and \(N^n\).
\end{enumerate}
\end{theorem}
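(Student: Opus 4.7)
The plan is to adapt the proof of Theorem~\ref{deuxiemetheorem} but to invoke Proposition~\ref{propositionSmoothProjection} at the level $\ell = sp-1$ that the topological hypothesis now provides. For each $\epsilon \in (0,1]$ this yields a smooth retraction $\eta_\epsilon : \R^\nu \to N^n$ and a compact set $K_\epsilon \subset N^n$ with $\eta_\epsilon(x) = x$ for $x \in N^n \setminus K_\epsilon$, with $\cH^n(K_\epsilon) \le C \epsilon^{sp}$, and with $\norm{D^j \eta_\epsilon}_{L^\infty(\R^\nu)} \le C_j \epsilon^{-j}$. As in Section~3, I would twist by the projections $P_\xi$ and work with $\eta_{\epsilon,\xi} = (P_\xi|_{N^n})^{-1} \circ \eta_\epsilon \circ P_\xi$ for $\xi \in B_\delta^\nu$.

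The first step is to reuse verbatim the estimate proved as a claim in the proof of Theorem~\ref{deuxiemetheorem}: there exists $F \in L^1(Q^m)$ depending only on $u$ such that
\[
\norm{\eta_{\epsilon,\xi} \circ u - u}_{W^{s,p}(Q^m)}^p \le [\eta_{\epsilon,\xi}]_{C^{k+1}(\R^\nu)}^{\sigma p}\, [\eta_{\epsilon,\xi}]_{C^{k}(\R^\nu)}^{(1-\sigma)p} \int\limits_{\{\eta_{\epsilon,\xi}\circ u \neq u\}} F,
\]
with the first factor reading $1$ when $s=k$ is an integer. Plugging in the derivative bounds for $\eta_\epsilon$ produces a prefactor of order $\epsilon^{-sp}$, and an application of Lemma~\ref{lemmefubini} together with $\cH^\nu(\Pi^{-1}(K_\epsilon)) \le C\cH^n(K_\epsilon) \le C\epsilon^{sp}$ yields
\[
\int\limits_{B_\delta^\nu} \norm{\eta_{\epsilon,\xi} \circ u - u}_{W^{s,p}(Q^m)}^p \dif \xi \le C
\]
\emph{uniformly in $\epsilon$}: the decay $\epsilon^{sp}$ of $\cH^n(K_\epsilon)$ now exactly cancels the blow-up $\epsilon^{-sp}$ coming from the derivatives of $\eta_\epsilon$, rather than leaving a positive power of $\epsilon$ as in Section~3. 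A second application of Lemma~\ref{lemmefubini} to $f \equiv 1$ gives $\int_{B_\delta^\nu} \cH^m(\{\eta_{\epsilon,\xi}\circ u \neq u\}) \dif \xi \le C\epsilon^{sp} \to 0$. A Chebyshev-type selection then extracts $\xi_\epsilon \in B_\delta^\nu$ satisfying simultaneously (a)~$\norm{\eta_{\epsilon,\xi_\epsilon}\circ u}_{W^{s,p}(Q^m)} \le C$ and (b)~$\cH^m(\{\eta_{\epsilon,\xi_\epsilon}\circ u \neq u\}) \to 0$ as $\epsilon \to 0$.

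Convergence in measure of $\eta_{\epsilon,\xi_\epsilon}\circ u$ to $u$ is then immediate from (b). For the derivatives of order $1 \le j \le k$, the standard fact that $v = w$ on a measurable set $A$ implies $Dv = Dw$ almost everywhere on $A$, iterated in $j$, gives $D^j(\eta_{\epsilon,\xi_\epsilon}\circ u) = D^j u$ almost everywhere on $\{\eta_{\epsilon,\xi_\epsilon}\circ u = u\}$, whose complement vanishes in measure; hence $D^j(\eta_{\epsilon,\xi_\epsilon}\circ u) \to D^j u$ in measure as well.

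The last step smooths the approximants by replacing $u$ inside $\eta_{\epsilon,\xi_\epsilon}\circ u$ with the convolution $\varphi_{t_\epsilon} * u$ on a slightly enlarged cube $Q^m_{1+\gamma}$, for $t_\epsilon \to 0$ chosen sufficiently small. The continuity of the composition operator on $W^{s,p}\cap L^\infty$ (Brezis-Mironescu, Maz\cprime ya-Shaposhnikova) ensures that $u_\epsilon := \eta_{\epsilon,\xi_\epsilon} \circ (\varphi_{t_\epsilon} * u)$ lies in $C^\infty(\overline Q^m; N^n)$, remains uniformly $W^{s,p}$-bounded, and inherits convergence in measure together with its first $k$ derivatives. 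The main obstacle is the joint selection of $\xi_\epsilon$ realizing (a) and (b) at once: the balance between the derivatives of $\eta_\epsilon$ and the measure of $K_\epsilon$ is tight (both scale like $\epsilon^{\pm sp}$) so one cannot hope for vanishing of the $W^{s,p}$ norm, only for uniform boundedness --- precisely the phenomenon that separates weak from strong density in the integer-$sp$ case.
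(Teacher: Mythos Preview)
Your proposal is correct and follows essentially the same approach as the paper's own proof: both invoke Proposition~\ref{propositionSmoothProjection} at level \(\ell = sp-1\), reuse verbatim the claim from the proof of Theorem~\ref{deuxiemetheorem}, observe that the \(\epsilon^{-sp}\) blow-up of the derivative prefactor is exactly cancelled by the \(\epsilon^{sp}\) bound on \(\cH^n(K_\epsilon)\), and then select \(\xi_\epsilon\) via Lemma~\ref{lemmefubini} to obtain simultaneous uniform \(W^{s,p}\) boundedness and vanishing of \(\cH^m(\{\eta_{\epsilon,\xi_\epsilon}\circ u \neq u\})\) before smoothing by convolution. The paper's proof is slightly terser but the logical structure and all the key estimates are identical to yours.
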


\begin{proof}
We explain what should be changed in the proof of Theorem~\ref{deuxiemetheorem}.
Since \(N^n\) is now merely \(sp-1\) simply connected, the map \(\eta\) may be chosen so that \(\eta(x) = x\) on \(N^n \setminus K\), where the compact set \(K\) satisfies 
\begin{equation}
\label{eqWeakDensityMeasureK}
\mathcal{H}^n(K) \leq \NewConstant \epsilon^{sp}.
\end{equation}
By inclusion~\eqref{eqStrongDensityInclusion}, by Lemma~\ref{lemmefubini} and by property~\eqref{eqWeakDensityMeasureK},
\begin{equation*}
\int\limits_{B_\delta^\nu} \cH^m\bigl(\{\eta_{\xi}\circ u \neq u\}\bigr) \dif \xi
\le \int\limits_{B_\delta^\nu} \cH^m\bigl(u^{-1}(\Pi^{-1}(K) + \xi)\bigr) \dif \xi
\le \Constant \cH^n(K) \cH^m(Q^m) 
\le \Constant \epsilon^{sp}.
\end{equation*}
Replacing \eqref{equationmeasureK} by \eqref{eqWeakDensityMeasureK}, estimate \eqref{estimatewspxi} becomes
\begin{equation*}
\int\limits_{B_\delta^\nu} \norm{\eta_{\xi}\circ u - u}_{W^{s,p}(Q^m)}^p \dif \xi
\le  \Constant .
\end{equation*}
Thus, for every \(0 < \epsilon \le 1\) there exists a smooth map \(\eta = \eta_\epsilon\) and \(\xi  = \xi_\epsilon \in B^{\nu}_\delta\)  such that
\[
\cH^m \bigl(\{\eta_{\epsilon, \xi_\epsilon} \circ u \neq u\}\bigr) \le \Constant \epsilon^{sp}
\]
and
\[
\norm{\eta_{\epsilon, \xi_\epsilon}\circ u - u}_{W^{s,p}(Q^m)}^p
\le \Constant.
\]

As in the proof of Theorem~\ref{deuxiemetheorem}, for every \(0 < \epsilon \le 1\) we find \(0 < t_\epsilon \le \gamma\) such that
\begin{equation}
\label{eqWeakDensityConvergenceConvolution}
\lim_{\epsilon \to 0}{\norm{\eta_{\epsilon, \xi_\epsilon} \circ (\varphi_{t_\epsilon} * u) - \eta_{\epsilon, \xi_\epsilon} \circ u}_{W^{s,p}(Q^m)}}= 0.
\end{equation}
Thus, by the triangle inequality,
\[
\norm{\eta_{\epsilon, \xi_\epsilon}\circ (\varphi_{t_\epsilon} * u)}_{W^{s,p}(Q^m)}
\le \Constant.
\]
Note that \(\eta_{\epsilon, \xi_{\epsilon}} \circ u\) and \(u\) as well as their derivatives up to order \(k\) coincide almost everywhere on \(\{\eta_{\xi_\epsilon, \epsilon} \circ u = u\}\). 
Combining
\[
\lim_{\epsilon \to 0}{\cH^m \bigl(\{\eta_{\epsilon, \xi_\epsilon}\circ u \neq u\}\bigr)} = 0
\]
and \eqref{eqWeakDensityConvergenceConvolution}, we deduce the convergence in measure of \(\eta_{\epsilon, \xi_{\epsilon}} \circ (\varphi_{t_\epsilon} * u)\) and its derivatives as \(\epsilon\) tends to zero.
This completes the proof of Theorem~\ref{theoremweakdensity}.
\end{proof}



\section*{Acknowledgments}

The second (ACP) and third (JVS) authors were supported by the Fonds de la Recherche scientifique---FNRS.


\begin{bibdiv}

\begin{biblist}

\bib{Adams_1975}{book}{
   author={Adams, Robert A.},
   title={Sobolev spaces},
   note={Pure and Applied Mathematics, Vol. 65},
   publisher={Academic Press, New York-London},
   date={1975},
}

\bib{Bethuel-1990}{article}{
   author={Bethuel, Fabrice},
   title={A characterization of maps in $H^1(B^3,S^2)$ which can be
   approximated by smooth maps},
   journal={Ann. Inst. H. Poincar\'e Anal. Non Lin\'eaire},
   volume={7},
   date={1990},
   pages={269--286},
}

\bib{Bethuel}{article}{
   author={Bethuel, Fabrice},
   title={The approximation problem for Sobolev maps between two manifolds},
   journal={Acta Math.},
   volume={167},
   date={1991},
   pages={153--206},
}

\bib{Bethuel-1995}{article}{
   author={Bethuel, Fabrice},
   title={Approximations in trace spaces defined between manifolds},
   journal={Nonlinear Anal.},
   volume={24},
   date={1995},
   pages={121--130},
}

\bib{Bethuel-Zheng}{article}{
   author={Bethuel, Fabrice},
   author={Zheng, Xiao Min},
   title={Density of smooth functions between two manifolds in Sobolev
   spaces},
   journal={J. Funct. Anal.},
   volume={80},
   date={1988},
   pages={60--75},
}

\bib{Bousquet-Ponce-VanSchaftingen}{article}{
   author={Bousquet, Pierre},
   author={Ponce, Augusto C.},
   author={Van Schaftingen, Jean},
   title={Strong density for higher order Sobolev spaces into compact manifolds},
    status={submitted paper},
}

\bib{Brezis-Mironescu-2001}{article}{
   author={Brezis, Ha{\"{\i}}m},
   author={Mironescu, Petru},
   title={Gagliardo-Nirenberg, composition and products in fractional
   Sobolev spaces},
   journal={J. Evol. Equ.},
   volume={1},
   date={2001},
   pages={387--404},
}

\bib{Brezis-Mironescu}{article}{
   author={Brezis, Haim},
   author={Mironescu, Petru},
   status={in preparation},
}

\bib{Escobedo}{article}{
   author={Escobedo, Miguel},
   title={Some remarks on the density of regular mappings in Sobolev classes
   of $S^M$-valued functions},
   journal={Rev. Mat. Univ. Complut. Madrid},
   volume={1},
   date={1988},
   pages={127--144},
}

\bib{Federer-Fleming}{article}{
   author={Federer, Herbert},
   author={Fleming, Wendell H.},
   title={Normal and integral currents},
   journal={Ann. of Math. (2)},
   volume={72},
   date={1960},
   pages={458--520},
}

\bib{Gagliardo}{article}{
   author={Gagliardo, Emilio},
   title={Caratterizzazioni delle tracce sulla frontiera relative ad alcune
   classi di funzioni in $n$ variabili},
   journal={Rend. Sem. Mat. Univ. Padova},
   volume={27},
   date={1957},
   pages={284--305},
}

\bib{Gagliardo-1959}{article}{
   author={Gagliardo, Emilio},
   title={Ulteriori propriet\`a di alcune classi di funzioni in pi\`u
   variabili},
   journal={Ricerche Mat.},
   volume={8},
   date={1959},
   pages={24--51},
}

\bib{Gastel-Nerf}{article}{
   author={Gastel, Andreas},
   author={Nerf, Andreas J.},
   title={Density of smooth maps in $W^{k,p}(M,N)$ for a close to
   critical domain dimension},
   journal={Ann. Global Anal. Geom.},
   volume={39},
   date={2011},
   pages={107--129},
}

\bib{Hajlasz}{article}{
   author={Haj{\l}asz, Piotr},
   title={Approximation of Sobolev mappings},
   journal={Nonlinear Anal.},
   volume={22},
   date={1994},
   pages={1579--1591},
}

\bib{Hang}{article}{
   author={Hang, Fengbo},
   title={Density problems for $W^{1,1}(M,N)$},
   journal={Comm. Pure Appl. Math.},
   volume={55},
   date={2002},
   pages={937--947},
}

\bib{Hang-Lin}{article}{
   author={Hang, Fengbo},
   author={Lin, Fanghua},
   title={Topology of Sobolev mappings. II},
   journal={Acta Math.},
   volume={191},
   date={2003},
   pages={55--107},
}

\bib{Hang-Lin-2003}{article}{
   author={Hang, Fengbo},
   author={Lin, Fanghua},
   title={Topology of Sobolev mappings. III},
   journal={Comm. Pure Appl. Math.},
   volume={56},
   date={2003},
   pages={1383--1415},
}

\bib{Hardt-Kinderlehrer-Lin}{article}{
   author={Hardt, R.},
   author={Kinderlehrer, D.},
   author={Lin, Fang-Hua},
   title={Stable defects of minimizers of constrained variational
   principles},
   journal={Ann. Inst. H. Poincar\'e Anal. Non Lin\'eaire},
   volume={5},
   date={1988},
   pages={297--322},
}

\bib{Hedberg}{article}{
   author={Hedberg, Lars Inge},
   title={On certain convolution inequalities},
   journal={Proc. Amer. Math. Soc.},
   volume={36},
   date={1972},
   pages={505--510},
}

\bib{Mazya}{book}{
   author={Maz\cprime ya, Vladimir},
   title={Sobolev spaces with applications to elliptic partial differential
   equations},
   series={Grundlehren der Mathematischen Wissenschaften},
   volume={342},
   publisher={Springer},
   place={Heidelberg},
   date={2011},
}

\bib{Mazya-Shaposhnikova}{article}{
   author={Maz\cprime ya, V.},
   author={Shaposhnikova, T.},
   title={An elementary proof of the Brezis and Mironescu theorem on the
   composition operator in fractional Sobolev spaces},
   journal={J. Evol. Equ.},
   volume={2},
   date={2002},
   pages={113--125},
}

\bib{Mironescu}{article}{
   author={Mironescu, Petru},
   title={Sobolev maps on manifolds: degree, approximation, lifting},
   conference={
      title={Perspectives in nonlinear partial differential equations},
   },
   book={
      editor={Berestycki, Henri},
      editor={Bertsch, Michiel},
      editor={Browder, Felix E.},
      editor={Nirenberg, Louis},
      editor={Peletier, Lambertus A.},
      editor={V{\'e}ron, Laurent},
      series={Contemp. Math.},
      volume={446},
      publisher={Amer. Math. Soc.},
      place={Providence, RI},
   },
   date={2007},
   pages={413--436},
   note={In honor of Ha\"\i m Brezis},
}

\bib{Mucci}{article}{
   author={Mucci, Domenico},
   title={Strong density results in trace spaces of maps between manifolds},
   journal={Manuscripta Math.},
   volume={128},
   date={2009},
   pages={421--441},
}

\bib{Nirenberg-1959}{article}{
   author={Nirenberg, Louis},
   title={On elliptic partial differential equations},
   journal={Ann. Scuola Norm. Sup. Pisa (3)},
   volume={13},
   date={1959},
   pages={115--162},
}

\bib{Oru}{thesis}{
  author = {Oru, F.},
  type = {Th\`ese de doctorat},
  organization = {\'Ecole Normale Sup\'erieure de Cachan},
  title = {R\^ole des oscillations dans quelques probl\`emes d'analyse non lin\'eaire},
  date = {1998},
}

\bib{Pakzad}{article}{
   author={Pakzad, Mohammad Reza},
   title={Weak density of smooth maps in $W^{1,1}(M,N)$ for non-abelian
   $\pi_1(N)$},
   journal={Ann. Global Anal. Geom.},
   volume={23},
   date={2003},
   pages={1--12},
}

\bib{Pakzad-Riviere}{article}{
   author={Pakzad, M. R.},
   author={Rivi{\`e}re, T.},
   title={Weak density of smooth maps for the Dirichlet energy between
   manifolds},
   journal={Geom. Funct. Anal.},
   volume={13},
   date={2003},
   pages={223--257},
}

\bib{Riviere}{article}{
   author={Rivi{\`e}re, Tristan},
   title={Dense subsets of $H^{1/2}(S^2,S^1)$},
   journal={Ann. Global Anal. Geom.},
   volume={18},
   date={2000},
   pages={517--528},
}

\bib{RunstSickel}{book}{
   author={Runst, Thomas},
   author={Sickel, Winfried},
   title={Sobolev spaces of fractional order, Nemytskij operators, and
   nonlinear partial differential equations},
   series={de Gruyter Series in Nonlinear Analysis and Applications},
   volume={3},
   publisher={Walter de Gruyter \& Co.},
   place={Berlin},
   date={1996},
}
\bib{SchoenUhlenbeck}{article}{
   author={Schoen, Richard},
   author={Uhlenbeck, Karen},
   title={Boundary regularity and the Dirichlet problem for harmonic maps},
   journal={J. Differential Geom.},
   volume={18},
   date={1983},
   pages={253--268},
}

\bib{Stein}{book}{
   author={Stein, Elias M.},
   title={Singular integrals and differentiability properties of functions},
   series={Princeton Mathematical Series, No. 30},
   publisher={Princeton University Press},
   place={Princeton, N.J.},
   date={1970},
}

\bib{White}{article}{
   author={White, Brian},
   title={Infima of energy functionals in homotopy classes of mappings},
   journal={J. Differential Geom.},
   volume={23},
   date={1986},
   pages={127--142},
}

\end{biblist}
 
\end{bibdiv}

\end{document}